\documentclass[a4paper,11pt]{article}

\usepackage[T1]{fontenc}
\usepackage{lmodern}
\usepackage[utf8]{inputenc}

\usepackage{amssymb}
\usepackage{amsmath}
\usepackage{amsthm}

\usepackage{mathtools,amssymb,amsthm,mathrsfs,calc,graphicx,xcolor,dsfont,tikz,bm}
\usepackage[
  bookmarks=true,
  bookmarksnumbered=true,
  bookmarksopen=true,
  unicode=true,
  pdftitle={Illumination Bodies in Projective Geometries},
  pdfauthor={Rotem Assouline, Florian Besau, Elisabeth M. Werner},
  pdfsubject={MSC 2020: Primary 52A38; Secondary 52A55, 53C20, 53C60},
  pdfcreator={PDFLaTeX},
  pdfkeywords={
    illumination body,
    affine surface area,
    weighted volume derivatives,
    convex geometry in space forms,
    projective Finsler structures,
    variational formulas for volume
  },
  colorlinks=true,colorlinks=true,
  linkcolor=black,
  citecolor=black,
  filecolor=black,
  urlcolor=black
]{hyperref}

\usetikzlibrary{arrows,calc,decorations.pathreplacing,fadings,intersections}
\usetikzlibrary{external}

\numberwithin{equation}{section}
\numberwithin{figure}{section}

\newtheorem{theorem}{Theorem}[section]
\newtheorem{lemma}[theorem]{Lemma}

\newtheorem{proposition}[theorem]{Proposition}
\newtheorem{conjecture}[theorem]{Conjecture}

\theoremstyle{definition}
\newtheorem{example}[theorem]{Example}
\newtheorem{definition}[theorem]{Definition}

\makeatletter
    \def\@cite#1#2{[\textbf{#1}\if@tempswa , #2\fi]}	%text
    \def\@biblabel#1{[#1]}								%bibliography
\makeatother
\def\ba{\mathbf{a}}

\def\bb{\mathbf{b}}

\def\be{\mathbf{e}}

\def\bn{\mathbf{n}}

\def\bp{\mathbf{p}}
\def\bq{\mathbf{q}}

\def\bu{\mathbf{u}}
\def\by{\mathbf{y}}
\def\bz{\mathbf{z}}
\def\bx{\mathbf{x}}

\def\bv{\mathbf{v}}
\def\bp{\mathbf{p}}
\def\bo{\mathbf{o}}

\newcommand{\HH}{\mathbb{H}}

\newcommand{\RR}{\mathbb{R}}
\renewcommand{\SS}{\mathbb{S}}
\newcommand{\Sp}{\mathrm{Sp}}

\newcommand{\cI}{\mathcal{I}}
\newcommand{\cK}{\mathcal{K}}

\newcommand{\dd}{{\mathrm{d}}}
\newcommand{\conv}{\operatorname{conv}}

\newcommand{\vol}{\operatorname{vol}}

\newcommand{\interior}{\operatorname{int}}

\newcommand{\as}{\operatorname{as}}

\begin{document}

\title{Illumination Bodies in Projective Geometries}

\author{Rotem Assouline\footnotemark[1],\; Florian Besau\footnotemark[2],\; Elisabeth M.\ Werner\footnotemark[3]}

\renewcommand{\thefootnote}{\fnsymbol{footnote}}

\footnotetext[1]{%
  Sorbonne Université, Université Paris Cité, CNRS, IMJ-PRG, France.\\ Email: \texttt{assouline@imj-prg.fr}%
}
\footnotetext[2]{%
  Technische Universität Wien, Austria. Email: \texttt{florian.besau@tuwien.ac.at}%
}
\footnotetext[3]{%
  Case Western Reserve University, USA.  Email: \texttt{elisabeth.werner@case.edu}%
}

\date{}
\maketitle

\begin{abstract}
We extend the notion of illumination bodies to Riemannian spaces of constant curvature and to projective Finsler geometries. We prove that the derivative of their volume defines a notion of surface area for convex bodies in these settings, generalizing the affine surface area in Euclidean space.

The proof is based on a general result on the derivative of weighted volumes of weighted illumination bodies in Euclidean space. In the appendix, we give some explicit examples for non-Euclidean illumination bodies.

 \smallskip\noindent
  \textbf{Keywords.} %
    illumination body,
    affine surface area,
    weighted volume derivatives,
    convex geometry in space forms,
    projective Finsler structures,
    variational formulas for volume.

  \smallskip\noindent
  \textbf{MSC 2020.} Primary:
  52A38, %  Mixed volumes and related topics; affine surface area and geometric inequalities
  Secondary:
  52A55, %Spherical and hyperbolic convex geometry
  53C20, %Global Riemannian geometry
  53C60. %Global Finsler geometry

\end{abstract}

\section{Introduction and main results}

The illumination bodies of a convex body form a one-parameter family of convex bodies defined by an equi-affine construction that relies only on convex hulls and volume. More precisely, a point belongs to the illumination body if and only if the volume of the cap body generated by the convex hull of the point and the convex body is bounded by a prescribed parameter value.

Illumination bodies were introduced in \cite{Werner:1994} as a conceptual dual to the affine construction of floating bodies \cite{BL:1988, SW:1990}. They are among the fundamental affine constructions associated with convex bodies, alongside, for example, John--Loewner ellipsoids, convolution bodies, centroid bodies, and Santaló regions.
See \cite{HL:2022, MW:2019, MW:2020, Zawalski:2025} for some more recent results on classical illumination bodies.

Since illumination bodies associate to each convex body a one-parameter family of convex bodies that converges to the original body in the Hausdorff metric, it is natural to study the corresponding change in volume. It was shown in \cite{Werner:1994} that the (one-sided) derivative of the volume yields an equi-affine invariant notion of surface area, namely the classical affine surface area.

The affine surface area is a fundamental affine invariant associated with convex bodies in Euclidean space. It was first introduced by Blaschke~\cite{Blaschke:1923} for convex bodies in low dimensions under additional regularity assumptions, and later extended to arbitrary dimensions without such assumptions; see, for example, the survey~\cite{SW:2023}. Ludwig and Reitzner~\cite{LR:1999} obtained a characterization of affine surface area as essentially the only upper semicontinuous, equi-affine invariant valuation on convex bodies.

\medskip
In this work, we continue a recent line of research aimed at extending classical affine constructions on convex bodies to more general geometric settings. For example, floating bodies~\cite{BW:2016, BW:2018, BW:2023} have been extended to projective geometries, and centroid bodies~\cite{BHPS:2023} have been extended to spherical spaces.
In \cite{BLW:2018, BRT:2021, BT:2020} random/best approximation of convex bodies in projective geometries were studied. Moreover,  Busemann's intersection inequality has been studied in spherical and hyperbolic spaces in~\cite{DKY:2018}.
The class of $R$-ball convex bodies is an area of intense recent research, e.g., \cite{AAF:2025, AACF:2025, BLN:2026, DT:2023} and analogs of affine surface area and notions of floating bodies \cite{SWY:2025}
have been established in this class.
\medskip

We outline our main results in the following subsections.

\medskip
\noindent\textbf{Acknowledgments.} EMW was supported by NSF grants DMS-2103482 and DMS-2506790. RA is supported by the Rothschild fellowship (Yad Hanadiv) and by the Fondation Sciences Math\'ematiques de Paris (FSMP).
Work for this paper has been started while the authors attended the Dual Trimester Program \emph{Synergies between modern probability, geometric analysis
and stochastic geometry} at the Hausdorff Research Institute for Mathematics in Bonn and was also continued while the last author was Research Fellow at  the University of M\"unster. All support is gratefully acknowledged.

\medskip

\subsection{Illumination bodies in Riemannian space forms}

A \emph{real space form} is a simply connected, complete Riemannian manifold of constant sectional curvature $\lambda \in \mathbb{R}$. For $\lambda \in \mathbb{R}$ and $n \geq 2$, we denote by $\Sp^n(\lambda)$ the $n$-dimensional real space form of curvature $\lambda$. Important examples include the sphere $\mathbb{S}^n = \Sp^n(1)$, hyperbolic space $\mathbb{H}^n = \Sp^n(-1)$, and Euclidean space $\mathbb{R}^n = \Sp^n(0)$.

The space form $\Sp^n(\lambda)$ carries a natural Riemannian volume form, which induces the \emph{Riemannian volume measure} $\vol_n^\lambda$.

A compact, geodesically convex set $K \subset \Sp^n(\lambda)$ with nonempty interior is called a \emph{convex body}. For $\lambda > 0$, every convex body is contained in an open hemisphere of $\Sp^n(\lambda)$. We denote the family of convex bodies in $\Sp^n(\lambda)$ by $\mathcal{K}_0(\Sp^n(\lambda))$. For $\bx \in \Sp^n(\lambda)$ and $K \in \mathcal{K}_0(\Sp^n(\lambda))$, we set
\[
    [\bx, K] := \conv(\{\bx\} \cup K).
\]

\begin{definition}[Illumination body in $\Sp^n(\lambda)$]
Let $\lambda \in \mathbb{R}$ and $n\geq 2$. For $K \in \mathcal{K}_0(\Sp^n(\lambda))$ and $\delta > 0$, the \emph{illumination body} $\mathcal{I}_\delta^\lambda(K)$ is defined by
\[
    \mathcal{I}_\delta^\lambda(K)
    = \left\{ \bx \in \Sp^n(\lambda) : \vol_n^\lambda\bigl([\bx, K] \setminus K\bigr) \leq \delta \right\}.
\]
\end{definition}

The \emph{floating area}
\begin{equation}\label{eqn:floating_area}
    \Omega^\lambda(K)
    = \int_{\partial K} H_{n-1}^{\lambda}(K,\bx)^{\frac{1}{n+1}} \,\vol_{\partial K}^\lambda(\dd\bx)
\end{equation}
of a convex body $K \in \mathcal{K}_0(\Sp^n(\lambda))$ was introduced in \cite{BW:2016,BW:2018} as a natural surface area arising from the derivative of the volume of the non-Euclidean floating body of $K$. Here, $\vol_{\partial K}^\lambda$ denotes the boundary measure induced by the Riemannian volume form on $\Sp^n(\lambda)$, restricted to the (Lipschitz) submanifold $\partial K$, and $H_{n-1}^{\lambda}(K,\bx)$ denotes the (generalized) Gauss--Kronecker curvature of $\partial K$ at $\bx$. For $\lambda = 0$, the quantity \eqref{eqn:floating_area} coincides with the classical affine surface area.

The derivative of the volume of the illumination body also gives rise to the floating area.

\begin{theorem}\label{thm:main_riemannian}
Let $n \geq 2$ and $\lambda \in \mathbb{R}$. If $K \in \mathcal{K}_0(\Sp^n(\lambda))$, then the right derivative of $\vol_n^\lambda(\mathcal{I}_\delta^\lambda(K))$ at $\delta = 0$ exists. More precisely,
\[
    \lim_{\delta \to 0^+}
    \frac{\vol_n^\lambda(\mathcal{I}_\delta^\lambda(K)) - \vol_n^\lambda(K)}
         {\delta^{\frac{2}{n+1}}}
    = c_n \, \Omega^\lambda(K),
\]
where
\begin{equation}\label{def:cn}
    c_n
    = \frac{1}{2}
    \left( \frac{n(n+1)}{\vol_{n-1}(B^{n-1}_{2})} \right)^{\frac{2}{n+1}}.
\end{equation}
\end{theorem}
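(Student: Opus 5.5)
The plan is to reduce the statement to a Euclidean result on \emph{weighted} illumination bodies. This is the route the abstract announces: a general theorem on the derivative of weighted volumes of weighted illumination bodies in Euclidean space.

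\textbf{Step 1: projective model.} Geodesics of $\Sp^n(\lambda)$ are straight lines in a suitable chart. For $\lambda>0$ use the gnomonic projection of an open hemisphere containing $K$; for $\lambda<0$ use the Beltrami--Klein model; for $\lambda=0$ use the identity. Fix such a chart $g\colon U\to\RR^n$, $U\subset\RR^n$ open. Geodesic convex hulls then become Euclidean convex hulls, so $g^{-1}([\bx,K])=[g^{-1}\bx,g^{-1}K]$. The Riemannian volume becomes a weighted measure $\phi_\lambda(\by)\,\dd\by$ with $\phi_\lambda$ smooth and positive on $U$; explicitly $\phi_\lambda(\by)=(1+\lambda|\by|^2)^{-(n+1)/2}$. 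Hence $g^{-1}\mathcal{I}^\lambda_\delta(K)$ is the weighted illumination body
\[
\{\by: \textstyle\int_{[\by,L]\setminus L}\phi_\lambda\le\delta\}, \qquad L=g^{-1}K,
\]
and the volume to differentiate is $\int_{\cI}\phi_\lambda$.

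\textbf{Step 2: Euclidean weighted theorem.} Prove that for $L\in\cK_0(\RR^n)$ and continuous positive weights $\phi,\psi$ near $L$,
\[
\lim_{\delta\to0^+}\delta^{-\frac{2}{n+1}}\int_{\cI^\phi_\delta(L)\setminus L}\psi
= c_n\int_{\partial L}\psi\,\phi^{-\frac{2}{n+1}}\,H_{n-1}^{\frac1{n+1}}\,\dd\mathcal H^{n-1}.
\]
The argument follows Werner's classical proof.
\begin{itemize}
\item Write the volume difference as a boundary integral via the map $\bx\mapsto$ the point of $\partial\cI_\delta$ on the normal ray, with Jacobian correction tending to $1$.
\item At a normal boundary point with generalized curvature $H_{n-1}>0$, approximate $\partial L$ by its osculating paraboloid. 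For a point at height $t$ above the paraboloid, the cap $[\by,L]\setminus L$ has volume asymptotic to $\frac{2^{\cdots}}{n(n+1)}\vol_{n-1}(B_2^{n-1})\,t^{(n+1)/2}H^{-1/2}$. Since $\phi$ is continuous and the cap shrinks, the weighted cap volume is $\phi(\bx)$ times this.
\item Inverting gives the height $t\sim(\delta/\phi)^{2/(n+1)}$ times the $c_n$-type constant times $H^{1/(n+1)}$.
\item Where $H_{n-1}=0$, show the rescaled height tends to $0$.
\end{itemize}

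\textbf{Step 3: uniform domination.} Provide an integrable dominating bound for $\delta^{-2/(n+1)}$ times the normal height, uniformly in $\delta$. Use rolling-ball arguments: for a point below which $L$ contains a ball of radius $r(\bx)$, the height is bounded by $C(\delta/r^{\cdots})^{2/(n+1)}$. Then apply the Schütt--Werner integrability of $r(\bx)^{-\alpha}$ for suitable $\alpha<1$, exactly as in the floating body proof. The weights are bounded above and below on a neighborhood of $L$, so they only change constants. Dominated convergence then gives Step 2.

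I expect this domination step, together with the exact non-smooth asymptotic at points of vanishing curvature, to be the main obstacle. Everything else is local Taylor approximation.

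\textbf{Step 4: translate back.} Apply Step 2 with $\phi=\psi=\phi_\lambda$ to obtain
\[
c_n\int_{\partial L}\phi_\lambda^{\frac{n-1}{n+1}}H_{n-1}^{\frac1{n+1}}\,\dd\mathcal H^{n-1}.
\]
It remains to check pointwise that
\[
\phi_\lambda^{\frac{n-1}{n+1}}H_{n-1}^{\frac1{n+1}}\,\dd\mathcal H^{n-1}=(H^\lambda_{n-1})^{\frac1{n+1}}\,\dd\vol^\lambda_{\partial K}.
\]
This is the same transformation identity between Euclidean and Riemannian Gauss--Kronecker curvature and boundary measures under the projective model used in \cite{BW:2016,BW:2018} to show projective invariance of the floating area. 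One can verify it at a point by centering the chart there using an isometry, where the metric agrees to first order with the Euclidean one. The result is $c_n\Omega^\lambda(K)$, which proves the theorem.
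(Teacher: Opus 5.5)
Your outline is correct and has the same architecture as the paper. The paper's proof of Theorem~\ref{thm:main_riemannian} is exactly your Steps~1 and~4: pass to the projective model $(\mathbb{B}^n_\lambda,g^\lambda)$ with density $\varphi_\lambda$, apply Theorem~\ref{thm:main_weighted} with $\varphi=\psi=\varphi_\lambda$, and multiply \eqref{eqn:lgauss} and \eqref{eqn:lboundary} from \cite{BW:2018} to identify the integrand with that of $\Omega^\lambda(K)$.

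The difference lies in how you obtain the weighted Euclidean theorem. You propose to redo Werner's paraboloid analysis and the rolling-ball domination with the weight built in. The paper instead reduces everything to the unweighted case by sandwiching.
\begin{itemize}
\item Globally, $\cI_{\delta/M_\varphi}(K)\subset\cI^\varphi_\delta(K)\subset\cI_{\delta/m_\varphi}(K)$. Together with \cite[Lemma~2]{Werner:1994}, this gives the domination by $C\,r_K(\bx)^{-\frac{n-1}{n+1}}$.
\item Locally, at a normal point, $\delta/((1+\varepsilon)\varphi(\bx))\le V_K(\bx^\delta)\le\delta/((1-\varepsilon)\varphi(\bx))$. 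Together with \cite[Lem.~3]{Werner:1994} and $\varepsilon\to0$, this gives the pointwise limit $c_nH_{n-1}(K,\bx)^{\frac1{n+1}}\varphi(\bx)^{-\frac2{n+1}}$.
\end{itemize}
So the weighted statement becomes a formal corollary of the unweighted lemmas. It uses only monotonicity in $\delta$, continuity of $\varphi$, and two-sided bounds on $\varphi$. Your route is more self-contained, but it means re-proving those lemmas.

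The one step that does not work as written is the parametrization by normal rays. For a general $K\in\cK_0(\RR^n)$ there are three problems.
\begin{itemize}
\item The map $\bx\mapsto\bx+t\,\bn_K(\bx)$ is defined only almost everywhere, is not Lipschitz, and has no classical Jacobian.
\item It misses the parts of $\cI_\delta\setminus K$ lying over normal cones at singular boundary points. This set already has positive volume for polytopes.
\item Pointwise convergence of the Jacobian factor to $1$ is not enough for dominated convergence. That factor involves the unbounded principal curvatures, while your Step~3 only dominates the height.
\end{itemize}
This is repairable. Let $h_\delta$ denote the normal height. By the generalized Steiner formula with support measures, every term except the leading one $\int_{\partial K}\int_0^{h_\delta(\bx)}\psi(\bx+t\bn_K(\bx))\,\dd t\,\dd\bx$ is $O(\sup h_\delta^2)$. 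A crude cone estimate (cap volume $\gtrsim t^n$ at distance $t$) gives $\sup h_\delta=O(\delta^{1/n})$, so these terms are $o(\delta^{\frac2{n+1}})$.

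It is simpler, though, to use the radial parametrization from an interior point, Lemma~\ref{lem:voldiff}. It is bi-Lipschitz for arbitrary convex bodies, and its Jacobian involves only $\bx\cdot\bn_K(\bx)$; this is why the paper assumes $\bo\in\interior K$.

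Two minor points:
\begin{itemize}
\item For $\lambda>0$ the identification $\cI^\lambda_\delta(K)=\cI^{\varphi_\lambda}_\delta(K)$ holds only for small $\delta$. This suffices for the limit.
\item ``Centering the chart'' only verifies the Step~4 identity at the origin. The Euclidean side is not a priori invariant under the model's isometries, which are projective rather than Euclidean maps. Simply combine \eqref{eqn:lgauss} and \eqref{eqn:lboundary} instead.
\end{itemize}
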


This result extends the Euclidean case $\lambda = 0$, first established in \cite{Werner:1994}.

\medskip
In Euclidean space, illumination bodies are convex.
In general, this fails for $\lambda > 0$. We refer to Theorem~\ref{thm:spherical_not_convex} where we show that the illumination body of a spherical convex body $K\subset \SS^2$ is not convex for any $\delta>0$ if the boundary of $K$ contains a geodesic segment. By contrast, in the hyperbolic plane we can show the following.

\begin{theorem}\label{thm:convex_hyperbolic_plane}
    The illumination body $\cI_\delta^h(K)$ of a convex body $K\subset \HH^2$ is convex for all $\delta>0$.
\end{theorem}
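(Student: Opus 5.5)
The plan is to reduce convexity of $\mathcal{I}_\delta^h(K)$ to a one-dimensional statement along geodesics, and then analyse the cap area along a geodesic using Gauss--Bonnet. Put $f(\bx) := \vol_2^{-1}([\bx,K]\setminus K)$, so that $\mathcal{I}_\delta^h(K)=\{f\le\delta\}$.

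\emph{Reduction to quasiconvexity.} It suffices to show that $f$ is quasiconvex on $\HH^2$, i.e.\ along every geodesic $\gamma$ the restriction $s\mapsto f(\gamma(s))$ has no strict interior maximum on any segment. Equivalently, every sublevel set meets $\gamma$ in an interval. I will work in the Klein model, where geodesics are Euclidean chords and geodesically convex sets are Euclidean convex. This makes convex hulls, supporting lines and tangency elementary, while the area is measured with the hyperbolic density.

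\emph{Geodesics meeting $K$.} Suppose first that $\gamma$ meets $K$. Then $f\equiv 0$ on $\gamma\cap K$. If $\bx'$ lies beyond $\bx$ on a geodesic ray starting in $K$, then $\bx\in[\bx',K]$, hence $[\bx,K]\subseteq[\bx',K]$. So $f$ is nondecreasing as one moves away from $K$ in either direction, and the sublevel sets on $\gamma$ are intervals.

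\emph{Geodesics missing $K$.} This is the real case. For $\bx\notin K$, let $\bp(\bx),\bq(\bx)\in\partial K$ be the points where the two supporting geodesics through $\bx$ touch $K$. Let $\alpha(\bx)$ be the angle of $[\bx,K]$ at $\bx$, and let $\tau(\bx)$ be the total geodesic curvature (total turning) of the arc of $\partial K$ visible from $\bx$. For nonsmooth $K$ the curvature is taken as a measure, and I would first treat smooth $K$ and approximate.

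Applying Gauss--Bonnet (curvature $-1$) to the region $[\bx,K]\setminus K$ gives a formula of the form
\[
  f(\bx)=\alpha(\bx)+\tau(\bx)-\pi .
\]
As a sanity check, in the Euclidean limit the right-hand side vanishes, since there the visible turning is $\pi-\alpha$. Consequently
\[
  \mathcal{I}_\delta^h(K)\setminus K=\{\bx:\ \alpha(\bx)+\tau(\bx)\le \pi+\delta\}.
\]

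Now move $\bx=\gamma(s)$ along $\gamma$. The tangent points $\bp(s)$ and $\bq(s)$ move monotonically along $\partial K$ in the same rotational sense. One of them enlarges the visible arc and the other shrinks it, so $\tau'(s)$ equals the difference of the curvature densities at $\bq$ and $\bp$, weighted by the speeds of the tangent points. The term $\alpha'(s)$ can be computed by hyperbolic trigonometry in the angles the two tangent geodesics make with $\gamma$. The goal is to show that $f'(s)$ changes sign at most once, from negative to positive.

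I expect this last step to be the main obstacle. My first attempt would be to write $f'(s)$ as the sum of two contributions, one from each rotating tangent geodesic: a geodesic of length $L$ rotating about its endpoint sweeps area at rate $(\cosh L-1)\,\dd\theta$. I would then show that at any critical point the contribution of the receding tangent point dominates on one side. This should follow from the monotonicity of $L$ and $\theta$ along $\gamma$ together with the convexity of the function $\cosh$.

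If this direct route becomes too messy, the fallback is a synthetic argument. Take $\bx,\by\in\mathcal{I}_\delta^h(K)$ and $\bz$ on the geodesic segment $[\bx,\by]$. I would try to show that $[\bz,K]\setminus K$ can be cut along the geodesic through $\bz$ and a point of $K$ into two pieces, which embed area-preservingly into $[\bx,K]\setminus K$ and $[\by,K]\setminus K$ respectively. The argument would be a comparison of geodesic triangles sharing a vertex on $\partial K$, using that triangle area in $\HH^2$ equals angle defect, which is monotone under inclusion of triangles.
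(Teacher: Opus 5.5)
Your reduction to quasiconvexity along geodesics is fine, and so is your monotonicity argument for geodesics that meet $K$. The gap is in the only nontrivial case, geodesics missing $K$. There you state the goal ($f'$ changes sign at most once) but give no argument for it, and the mechanism you suggest does not hold up.

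\begin{itemize}
\item The tangent length $L$ is not monotone along $\gamma$. For a disk of radius $r$ centered at $\bo$ one has $\cosh d(\bx,\bo)=\cosh r\cosh L$, and $d(\gamma(s),\bo)$ first decreases and then increases.
\item Convexity of $\cosh$ cannot be what drives the result. On $\SS^2$ the sweep rate $1-\cos L$ is also convex for $L<\pi/2$. Yet Lemma~\ref{lem:spherical_triangle} gives small triangles with fixed base whose area has a strict local maximum as the apex moves along a geodesic, and Theorem~\ref{thm:spherical_not_convex} shows the spherical analogue of the statement is false. Any proof must use negative curvature in an essential, second-order way, and your plan never says where that happens.
\item The synthetic fallback fails for a logical reason. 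Area-preserving embeddings of the two pieces of $[\bz,K]\setminus K$ into $[\bx,K]\setminus K$ and $[\by,K]\setminus K$ give only $f(\bz)\le f(\bx)+f(\by)$, not $f(\bz)\le\max\{f(\bx),f(\by)\}$.
\item A minor slip: Gauss--Bonnet gives the angle-defect form $f=\pi-\alpha-\tau$, not $\alpha+\tau-\pi$. Your Euclidean sanity check cannot detect this sign, since both expressions vanish there.
\end{itemize}

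The idea you are missing is the paper's freezing of the contact points, which removes $\partial K$ from the problem entirely. Fix $t_0$ and let $\bp,\bq$ be the contact points of the supporting geodesics from $\gamma(t_0)$. Let $H^+$ be the closed half-plane bounded by the geodesic through $\bp,\bq$ that contains $\gamma(t_0)$. For $t$ near $t_0$, so that $\gamma(t)\in H^+$, you have
\begin{itemize}
\item $[\gamma(t),K]\supseteq K\cup\conv\{\gamma(t),\bp,\bq\}$, and
\item $K\cap\conv\{\gamma(t),\bp,\bq\}\subseteq K\cap H^+=K\cap\conv\{\gamma(t_0),\bp,\bq\}$.
\end{itemize}
Hence $f(\gamma(t))-f(\gamma(t_0))\ge A(t)-A(t_0)$, where $A(t)$ is the area of the triangle with fixed base $\bp\bq$ and apex $\gamma(t)$. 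So the motion and curvature of the true contact points only help.

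Everything then reduces to Lemma~\ref{lem:hyperbolic_plane_triangle}: $A$ has no local maximum along a geodesic in $\HH^2$. This is exactly the hyperbolic-specific second-order fact. The paper proves it by an explicit upper half-plane computation. It writes $\tan(A/2)$ as the absolute value of a rational function of $e^t$ and checks that at every critical point the second derivative of that function has the same sign as the function itself.

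Your derivative formulas might give an alternative proof of this lemma. As it stands, though, neither the reduction nor the lemma is in your proposal, so the argument does not close.
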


We conjecture that this property persists in space forms of nonpositive curvature also in higher dimensions.

\begin{conjecture}\label{conj:hyperbolic_convex}
Let $n \geq 3$ and $\lambda \leq 0$. If $K \in \mathcal{K}_0(\Sp^n(\lambda))$, then $\mathcal{I}_\delta^\lambda(K) \in \mathcal{K}_0(\Sp^n(\lambda))$ for all $\delta > 0$.
\end{conjecture}

\subsection{Illumination bodies in projective Finsler geometries}

Projective Finsler geometries can be viewed as generalizations of hyperbolic space forms $\Sp^n(\lambda)$ for $\lambda < 0$. A projective Finsler geometry $(X,F)$ is defined on an open convex set $X \subset \RR^n$, where the Finsler metric $F: TX \to \RR$ is a continuous function on the tangent bundle such that, for every $\bx \in X$, the map $F(\bx,\cdot): T_{\bx}X \to \RR$ defines a norm on the tangent space $T_{\bx}X \cong \RR^n$.

We assume that $F$ is $C^3$-smooth away from the zero section of $TX$ and strongly convex, that is, the vertical Hessian
\[
    \frac{\partial^2 F}{\partial v_i \partial v_j}(\bx,\bv)
\]
is nondegenerate for every $\bv \neq 0$.

Given a Finsler geometry $(X,F)$ and a smooth curve $\gamma:[a,b]\to X$, the $F$-length of $\gamma$ is defined by
\[
    \mathrm{Len}^F(\gamma) = \int_a^b F(\gamma(t),\gamma'(t))\, \dd t.
\]
The associated (generally non-symmetric) distance function $d_F$ is given by
\[
    d_F(\bx,\by) = \inf\{ \mathrm{Len}^F(\gamma) : \text{$\gamma$ is a smooth curve from $\bx$ to $\by$} \}.
\]
In general, $d_F(\bx,\by) \neq d_F(\by,\bx)$.

We assume that $F$ is \emph{projective}, meaning that straight line segments are (unparametrized) geodesics. Under the above regularity assumptions, these are the only geodesics. We denote by $\cK_0(X)$ the family of convex bodies in $(X,F)$; note that $K \in \cK_0(X)$ if and only if $K \in \cK_0(\RR^n)$ and $K \subset X$. An important example is given by Hilbert geometries, in which the induced distance $d_F$ is a metric; see Section~\ref{sec:Hilbert_geometries}.

Unlike the Riemannian setting, a Finsler geometry does not admit a canonical volume form. Instead, a choice of volume is determined by prescribing densities on the tangent spaces $T_{\bx}X$; see \cite[Sec.\ 5.5.3]{BBI:2001} and also \cite{APT:2004} for an axiomatic treatment. Classical choices include the \emph{Busemann}, \emph{Holmes--Thompson}, and \emph{Gromov mass and comass} definitions; for more details see Section~\ref{sec:Background_Finsler}. Any such choice induces a volume measure $\vol_n^F$ on $X$ with continuous density $\varphi_F : X \to (0,\infty)$ with respect to Lebesgue measure, see \cite[Prop.\ 5.5.11]{BBI:2001}.

\begin{definition}[Finsler illumination body]
Let $(X,F)$ be a projective Finsler geometry equipped with a volume measure $\vol_n^F$, and let $K \in \cK_0(X)$. For $\delta > 0$, the \emph{illumination body} $\cI_\delta^F(K)$ is defined by
\[
    \cI_\delta^F(K)
    = \left\{ \bx \in X : \vol_n^F([\bx,K] \setminus K) \leq \delta \right\}.
\]
\end{definition}

In \cite{BLW:2018}, the volume derivative of floating bodies in Hilbert geometries was studied, leading to a notion of surface area extending the hyperbolic floating area. In a similar spirit, the derivative of the volume of illumination bodies in projective Finsler geometries gives rise to a corresponding surface area.

\begin{theorem}\label{thm:main_finsler}
Let $(X,F)$ be a projective Finsler geometry with volume measure $\vol_n^F$, and let $K \in \cK_0(X)$. Then
\[
    \lim_{\delta \to 0^+}
    \frac{\vol_n^F(\cI_\delta^F(K)) - \vol_n^F(K)}
         {\delta^{\frac{2}{n+1}}}
    = c_n \, \Omega^F(K),
\]
where $c_n$ is defined in \eqref{def:cn} and
\begin{equation}\label{def:finsler-affine-surface-area}
    \Omega^F(K)
    = \int_{\partial K}
    H_{n-1}(K,\bx)^{\frac{1}{n+1}}
    \varphi_F(\bx)^{\frac{n-1}{n+1}}
    \,\mathcal{H}^{n-1}(\dd \bx),
\end{equation}
with $\varphi_F$ denoting the density of $\vol_n^F$ and $\mathcal{H}^{n-1}$ is $(n-1)$-dimensional Hausdorff measure in $\RR^n$.
\end{theorem}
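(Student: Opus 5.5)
The plan is to reduce Theorem~\ref{thm:main_finsler} to a Euclidean statement about \emph{weighted} illumination bodies, which is the general result announced in the abstract. Since $F$ is projective, geodesic segments in $X$ are straight line segments. Hence $[\bx,K]$ is the ordinary Euclidean convex hull, and $K\in\cK_0(X)$ is an ordinary convex body in $\RR^n$. The volume $\vol_n^F$ has continuous positive density $\varphi_F$ with respect to Lebesgue measure. So $\cI_\delta^F(K)$ is exactly the weighted illumination body
\[
\cI_\delta^{\varphi}(K)=\Bigl\{\bx\in X:\int_{[\bx,K]\setminus K}\varphi(\by)\,\dd\by\le\delta\Bigr\},\qquad \varphi=\varphi_F ,
\]
and the quantity to differentiate is $\int_{\cI_\delta^\varphi(K)\setminus K}\varphi\,\dd\by$. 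The Finsler metric plays no further role beyond producing $\varphi$. The work therefore lies in the following weighted Euclidean statement: for $K\in\cK_0(\RR^n)$ and a continuous positive weight $\varphi$ near $K$,
\[
\lim_{\delta\to0^+}\delta^{-\frac{2}{n+1}}\int_{\cI^\varphi_\delta(K)\setminus K}\varphi
 = c_n\int_{\partial K}H_{n-1}^{\frac1{n+1}}\varphi^{1-\frac{2}{n+1}}\,\dd\mathcal H^{n-1}.
\]
This exponent $1-\frac{2}{n+1}$ equals $\frac{n-1}{n+1}$, as required.

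To prove the weighted statement, I would follow Werner's original argument \cite{Werner:1994}. Write the volume difference as an integral over $\partial K$ of the weighted length of the normal segment from $\bx\in\partial K$ to $\partial\cI_\delta^\varphi(K)$, using a Lipschitz parametrization of the region $\cI_\delta\setminus K$ by boundary points and their outer normals. By continuity of $\varphi$ and the fact that $\cI_\delta^\varphi(K)\to K$ in the Hausdorff metric, both the weight along the segment and the weight in the cap $[\bx+t\bn,K]\setminus K$ may be replaced by $\varphi(\bx)$, up to a factor $1+o(1)$ that is uniform in $\bx$. At a point of twice differentiability with positive curvature, the boundary is locally approximated by the osculating paraboloid. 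The cap volume for a point at height $t$ then behaves like $C_n t^{\frac{n+1}{2}}H_{n-1}^{-1/2}$. Solving $\varphi(\bx)\,C_n t^{\frac{n+1}{2}}H^{-1/2}=\delta$ gives $t\sim(\delta H^{1/2}/(C_n\varphi))^{\frac2{n+1}}$. Multiplying by $\varphi(\bx)$ yields the pointwise limit with the constant $c_n$ of Theorem~\ref{thm:main_riemannian}; that constant is identical since it is computed from the same paraboloid model. Points with $H_{n-1}=0$ need a separate argument: there the rescaled distance tends to $0$, by comparison with larger paraboloids. Alexandrov's theorem guarantees that almost every boundary point is of one of these two types.

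The main obstacle is interchanging the limit and the integral, i.e.\ finding a uniform integrable bound for $\delta^{-\frac2{n+1}}\,\mathrm{dist}(\bx,\partial\cI_\delta)$. Since $\varphi$ is bounded between positive constants on a neighbourhood of $K$, I would squeeze $\cI^\varphi_\delta(K)$ between the unweighted illumination bodies $\cI_{\delta/\max\varphi}(K)$ and $\cI_{\delta/\min\varphi}(K)$. I would then reuse the domination used for the Euclidean case in \cite{Werner:1994}: comparison of caps with rolling inner balls, after approximating $K$ by bodies with a rolling ball or using the bound via the radius of the largest inscribed ball touching at $\bx$. Dominated convergence then completes the proof. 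A secondary technical point is that $\cI_\delta^\varphi(K)$ need not be convex. However, only the radial/normal description of $\partial\cI_\delta$ near $K$ is needed, and this holds because the cap volume is strictly increasing along outward normal rays.
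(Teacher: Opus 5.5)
Your overall architecture matches the paper's. Projectivity gives $\cI^F_\delta(K)=\cI^{\varphi_F}_\delta(K)$, so the statement is Theorem~\ref{thm:main_weighted} with $\psi=\varphi=\varphi_F$. The integrable majorant comes from squeezing $\cI^{\varphi}_\delta(K)$ between $\cI_{\delta/M_\varphi}(K)$ and $\cI_{\delta/m_\varphi}(K)$, then using Werner's bound $C\,r_K(\bx)^{-\frac{n-1}{n+1}}$ together with \eqref{eqn:rolling_function}. The pointwise limit comes from freezing $\varphi$ at $\bx$ and applying the Euclidean limit at normal points, with a crude lower bound on $\varphi$ when $H_{n-1}(K,\bx)=0$.

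\textbf{The step that fails as written.} You parametrize $\cI_\delta\setminus K$ by $(\bx,t)\mapsto\bx+t\,\bn_K(\bx)$. For a general convex body this is not a Lipschitz parametrization of that region:
\begin{itemize}
\item $\bn_K$ exists only almost everywhere and is not Lipschitz.
\item The normal cones at singular points of $\partial K$ (an $\mathcal H^{n-1}$-null set) sweep out sets of positive volume outside $K$, already for a polytope. An integral over $\partial K$ using the unique normal misses them.
\item The Jacobian is $\prod_i(1+t\kappa_i(\bx))$, with $\kappa_i$ the principal curvatures. You only know $\kappa_i\le r_K(\bx)^{-1}$ and that the normal distance satisfies $t_\delta(\bx)\le C\delta^{\frac{2}{n+1}}r_K(\bx)^{-\frac{n-1}{n+1}}$. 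So $t\kappa_i$ is not bounded where $r_K(\bx)$ is small compared with $\delta^{1/n}$.
\end{itemize}
Consequently Werner's bound, which in the form used in Lemma~\ref{lem:bounded} concerns the radial distance, does not give an integrable majorant for your integrand. You could repair this with the local Steiner formula (support measures) and the a priori inclusion $\cI_\delta(K)\subset K+C\delta^{1/n}B_2^n$. That would show the singular and higher-order terms are $O(\delta^{2/n})=o(\delta^{\frac{2}{n+1}})$, but it is real additional work you have not done.

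\textbf{How the paper avoids this.} It places $\bo\in\interior K$ and parametrizes radially, $\by=t\bx/\|\bx\|$ for $\bx\in\partial K$ (Lemma~\ref{lem:voldiff}). This map is bi-Lipschitz and covers the region between $K$ and the star body $\cI_\delta(K)$ exactly. Its Jacobian $(t/\|\bx\|)^{n-1}\,\bx\cdot\bn_K(\bx)/\|\bx\|$ is bounded. It also produces $(\bx^\delta-\bx)\cdot\bn_K(\bx)$ in precisely the form of Werner's Lemmas 2 and 3, so both the domination and the pointwise limit can be quoted rather than re-derived.

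\textbf{A smaller point.} You claim $\varphi$ can be replaced by $\varphi(\bx)$ in the cap up to a factor $1+o(1)$ uniformly in $\bx$. This is false: on a flat piece of $\partial K$ the cap $[\bx^\delta,K]\setminus K$ does not shrink to $\bx$. You do not need uniformity, though. The replacement is only used pointwise at normal points with $H_{n-1}(K,\bx)>0$, where the visible part of $\partial K$ does shrink to $\bx$, and uniform control comes from the squeeze.
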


If $(X,F)$ is a Hilbert geometry, then $\Omega^F$ coincides with the surface area previously obtained in \cite{BLW:2018} via floating bodies and random or best approximating polytopes.

\subsection{Dual volumes and illumination bodies}

The classical Brunn--Minkowski theory (BMt) of convex bodies is fundamentally based on volume and Minkowski addition. Lutwak~\cite{Lutwak:1975_1} introduced the dual Brunn--Minkowski theory (dBMt), obtained by replacing Minkowski addition with radial addition. This theory exhibits many striking parallels with the classical Brunn--Minkowski theory. We refer to the monograph by Schneider \cite{Schneider:2014} and the recent survey \cite{HYZ:2025} for the history of the dBMt and for more recent results on inequalities for dual volumes see \cite{SZ:2025,XZ:2022}.

\medskip
In BMt the quermassintegrals (intrinsic volumes) are a fundamental notion that arise, for example, by taking the average volumes of projections into $k$-dimensional linear subspaces for $k=0,\dotsc,n$.
The quantities dual to the quermassintegrals in the dBMt are the \emph{dual volumes} $\widetilde{V}_k$ for $k=0,\dotsc,n$, that arise from taking the average volumes over the intersection with all $k$-dimensional linear subspaces.
As an extension, Lutwak \cite{Lutwak:1975_2} derived that for $q \in \RR$ and a star body $L \subset \RR^n$ the dual volume of $L$ may be expressed by
\[
    \widetilde{V}_q(L)
    = \frac{1}{n} \int_{\SS^{n-1}} \rho(L,\bu)^q \, \mathcal{H}^{n-1}(\dd \bu),
\]
where $\rho(L,\cdot)$ denotes the (continuous) radial function of $L$. Note that $\widetilde{V}_n = \vol_n$.

We show that the derivative of the dual volume along illumination bodies exists and admits an explicit representation.

\begin{theorem}\label{thm:main_dual}
Let $n \geq 2$, $q \in \RR \setminus \{0\}$, and let $K \in \cK_0(\RR^n)$ with $\bo \in \interior K$. Then
\[
    \lim_{\delta \to 0^+}
    \frac{\widetilde{V}_q(\cI_\delta(K)) - \widetilde{V}_q(K)}
         {\delta^{\frac{2}{n+1}}}
    = c_n \frac{q}{n}
    \int_{\partial K}
    H_{n-1}(K,\bx)^{\frac{1}{n+1}}
    \|\bx\|^{q-n}
    \, \mathcal{H}^{n-1}(\dd \bx),
\]
where $\|\cdot\|$ denotes the standard Euclidean norm in $\RR^n$.
\end{theorem}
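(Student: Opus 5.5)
The plan is to derive Theorem~\ref{thm:main_dual} from the general weighted statement announced in the abstract: for continuous positive weights $\varphi,\psi$ on $\RR^n$, let $\cI_\delta^\varphi(K)=\{\bx : \int_{[\bx,K]\setminus K}\varphi \le \delta\}$. Then
\[
    \lim_{\delta\to0^+}\frac{\int_{\cI^\varphi_\delta(K)\setminus K}\psi\,\dd\bx}{\delta^{\frac{2}{n+1}}}
    = c_n\int_{\partial K} H_{n-1}(K,\bx)^{\frac{1}{n+1}}\,\varphi(\bx)^{-\frac{2}{n+1}}\,\psi(\bx)\,\mathcal{H}^{n-1}(\dd\bx).
\]
Here we take $\varphi\equiv1$, so $\cI_\delta^\varphi(K)=\cI_\delta(K)$ is the classical illumination body. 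The point is to rewrite the difference of dual volumes as a weighted volume of $\cI_\delta(K)\setminus K$. In polar coordinates, $\widetilde V_q(L)=\frac1n\int_{\SS^{n-1}}\rho(L,\bu)^q\,\dd\bu=\frac qn\int_L\|\by\|^{q-n}\dd\by$ whenever $q>0$ and $\bo\in L$ (star-shaped). For $q<0$ the integral over $L$ diverges at the origin, but the difference is still well defined:
\[
    \widetilde V_q(\cI_\delta(K))-\widetilde V_q(K)=\frac1n\int_{\SS^{n-1}}\bigl(\rho(\cI_\delta K,\bu)^q-\rho(K,\bu)^q\bigr)\dd\bu=\frac qn\int_{\cI_\delta(K)\setminus K}\|\by\|^{q-n}\,\dd\by .
\]
This uses only that $K\subset\cI_\delta(K)$ are both star-shaped with respect to $\bo$; $\cI_\delta(K)$ is convex in the Euclidean case. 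The weight $\psi(\by)=\frac qn\|\by\|^{q-n}$ is continuous and bounded, and of fixed sign, on a neighbourhood of $\partial K$ avoiding $\bo$. This holds because $\bo\in\interior K$ and $\cI_\delta(K)\to K$ in Hausdorff distance.

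Applying the weighted result with $\varphi\equiv1$ and this $\psi$ gives exactly the claimed formula; for $q<0$ we apply it to $-\psi$. The weighted theorem will only need $\psi$ to be continuous near $\partial K$, since the set $\cI_\delta(K)\setminus K$ lies in a shrinking neighbourhood of $\partial K$. So we can modify $\psi$ near $\bo$ to make it globally continuous without affecting the limit.

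If the weighted Euclidean result has to be proved rather than assumed, the route is Werner's original argument, localised.
\begin{itemize}
\item For $\bx\in\partial K$ with outer normal $N(\bx)$, define $\Delta_\delta(\bx)$ as the distance from $\bx$ along $N(\bx)$ to $\partial\cI_\delta(K)$.
\item Write $\int_{\cI_\delta\setminus K}\psi=\int_{\partial K}\int_0^{\Delta_\delta(\bx)}\psi(\bx+tN)\,J(\bx,t)\,\dd t\,\dd\mathcal H^{n-1}$, with Jacobian $J\to1$.
\item At normal boundary points with $0<H_{n-1}<\infty$, approximate $\partial K$ by its osculating paraboloid. There the cap volume of $[\bx+tN,K]\setminus K$ behaves like $\frac{\vol_{n-1}(B_2^{n-1})}{n(n+1)}\,\cdots\,t^{\frac{n+1}{2}}H_{n-1}^{-1/2}$, which gives $\Delta_\delta(\bx)\delta^{-\frac{2}{n+1}}\to c_n H_{n-1}^{\frac1{n+1}}$ up to the constant bookkeeping that produces $c_n$.
\item Conclude by dominated convergence.
\end{itemize}

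The main obstacle is this last step. It requires a uniform integrable bound on $\Delta_\delta(\bx)\delta^{-\frac2{n+1}}$, obtained via inscribed and circumscribed balls at boundary points. It also requires handling points where $H_{n-1}=0$, or where $H_{n-1}=\infty$ on a null set, showing that these contribute zero in the limit. Both are standard from \cite{Werner:1994}, and the continuous weight $\psi$ changes nothing there.
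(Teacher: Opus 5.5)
Your proposal is correct and follows the paper's proof. Both rewrite $\widetilde V_q(\cI_\delta(K))-\widetilde V_q(K)$ in polar coordinates as $\frac{q}{n}\int_{\cI_\delta(K)\setminus K}\|\by\|^{q-n}\,\dd\by$, make the weight continuous by modifying it inside a ball around $\bo$ contained in $\interior K$, and apply Theorem~\ref{thm:main_weighted} with $\varphi\equiv 1$ (to $-\psi$ when $q<0$). The only differences are cosmetic: you handle both signs of $q$ at once through the radial formula for the difference, whereas the paper writes $\widetilde V_q$ as an integral over $L$ for $q>0$ and over $\RR^n\setminus L$ for $q<0$; and your optional sketch of the weighted result is not needed, since that theorem is available (the paper proves it with the radial parametrization of Lemma~\ref{lem:voldiff}, not a normal one).
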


For $q = n$, this reduces to the classical result yielding affine surface area.

\subsection{Weighted illumination bodies in Euclidean space}

We now turn to the main result of this paper, which will be used in Section~\ref{sec:applications} to establish Theorems~\ref{thm:main_riemannian},~\ref{thm:main_finsler}, and~\ref{thm:main_dual} as special cases. It generalizes the result of \cite{Werner:1994} on the volume derivative of illumination bodies. Also note that (weighted) illumination surface bodies were previously studied in \cite[Sec.\ 3]{WY:2010}.

\medskip
To describe our setting, we define weighted illumination bodies as follows. They are the analog of weighted floating bodies which were  introduced in \cite{Werner:2002}.

\begin{definition}[weighted illumination body]\label{def:WIB}
Let $K \in \cK_0(\RR^n)$ and let $U \supset K$ be an open set. Let $\varphi : U \to (0,\infty)$ be a continuous and integrable function. Let $A$ be a measurable set in $\mathbb{R}^n$. We define a measure $\vol_n^\varphi$ on $\RR^n$ by
\[
    \vol_n^\varphi(A)
    = \int_{A \cap U} \varphi \, \dd \lambda_n,
\]
where $\lambda_n$ denotes Lebesgue measure.

For $\delta > 0$, the \emph{$\varphi$-weighted illumination body} $\cI_\delta^\varphi(K)$ is defined by
\[
    \cI_\delta^\varphi(K)
    = \left\{ \bx \in \RR^n : \vol_n^\varphi([\bx,K] \setminus K) \leq \delta \right\}.
\]
\end{definition}

Equivalently, we may express the $\varphi$-weighted illumination body as sublevel sets of a functional $V_K^\varphi : \RR^n \to [0,+\infty)$ defined by
\begin{equation*}
    V_K^\varphi(\bz) := \vol_n^\varphi([\bz,K]\setminus K).
\end{equation*}
Then by Definition~\ref{def:WIB}, the $\varphi$-weighted illumination bodies $\cI_\delta^\varphi(K)$ is
\begin{equation*}
    \cI_\delta^\varphi(K) = \{\bz \in \mathbb{R}^n : V_K^\varphi(\bz) \leq \delta\}.
\end{equation*}

In the special case $U=\RR^n$ and $\varphi\equiv 1$ we have, see \cite{MW:2020},
\begin{equation}\label{eqn:VK_uniform}
    V_K(\bz) = -\frac{1}{2} \vol_n(K) + \frac{1}{2n} \int_{\partial K} \left|(\bz-\bx)\cdot \bn_K(\bx)\right|\, \mathcal{H}^{n-1}(\dd \bx),
\end{equation}
where $\bn_K(\bx)$ denotes the outward unit normal of $\partial K$ at $\bx$, which exists for almost all boundary points $\bx$.
Convexity of $V_K$ follows by the convexity of the map $\bz\mapsto |(\bz-\bx)\cdot \bn_K(\bx)|$ which is defined for almost all $\bx\in\partial K$.
In particular, this shows that the classical illumination body $\cI_\delta(K) = \{\bz: V_K(\bz)\leq \delta\}$ is convex for all $\delta>0$ as sublevel sets of a convex function.

\medskip
For non-uniform weight functions $V_K^\varphi$ is in general not convex, but can be expressed similar to \eqref{eqn:VK_uniform} as follows.

\begin{theorem}\label{thm:VK_weighted}
    Let $K\in\cK_0(\RR^n)$, $U\supset K$ be an open set, and let $\varphi:U\to (0,+\infty)$ be a continuous and integrable function. Then
    \begin{equation*}
        V_K^\varphi(\bz) = -\frac{1}{2}\vol_n^\varphi(K)+ \frac{1}{2} \int_{\partial K} \!\!\!\! |(\bz-\bx)\cdot \bn_K(\bx)| \left(\!\int_{0}^1 \varphi([\bz,\bx]_s)\, s^{n-1}  \dd s\! \right)\!\mathcal{H}^{n-1}(\dd \bx),
    \end{equation*}
    were $[\bz,\bx]_s = (1-s)\bz+s\bx$ and we extend $\varphi$ to $\RR^n$ by setting $\varphi(\by)=0$ if $\by\not\in U$.
    In particular, this shows that $V_K^\varphi$ is continuous.
\end{theorem}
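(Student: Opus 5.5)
The plan is to compute the $\varphi$-volume of the cone $[\bz,K]$ over the boundary by polar coordinates centered at $\bz$, and then subtract $\vol_n^\varphi(K)$. We first treat $\bz\notin K$; for $\bz\in K$ both sides should vanish, and we check this at the end.

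For $\bz\notin K$, split $\partial K$ (up to an $\mathcal{H}^{n-1}$-null set of points without a normal, or with $(\bz-\bx)\cdot\bn_K(\bx)=0$) into
\[
\partial^+ K=\{\bx:(\bz-\bx)\cdot \bn_K(\bx)<0\},\qquad \partial^- K=\{\bx:(\bz-\bx)\cdot \bn_K(\bx)>0\}.
\]
These are the parts of $\partial K$ facing away from $\bz$ and facing $\bz$ (the illuminated part), respectively. Every ray from $\bz$ that meets $\interior K$ enters through $\partial^-K$ and exits through $\partial^+K$. Hence $[\bz,K]$ is, up to null sets, the union of segments $[\bz,\bx]$ for $\bx\in\partial^+K$, and $K\cup\{\text{segments } [\bz,\bx],\ \bx\in\partial^-K\}$ decomposes it likewise.

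The key computation is the change of variables
\[
\Phi:(0,1)\times \partial^\pm K\to\RR^n,\qquad (s,\bx)\mapsto (1-s)\bz+s\bx .
\]
Its Jacobian with respect to $\dd s\,\mathcal{H}^{n-1}(\dd\bx)$ is $s^{n-1}|(\bx-\bz)\cdot\bn_K(\bx)|$: the cone of height $|(\bx-\bz)\cdot\bn_K|$ over a surface element scales by $s^{n-1}$. Since $\partial K$ is Lipschitz, the area formula applies, and $\Phi$ is injective on each of $\partial^\pm K$ by convexity. This gives
\[
\vol_n^\varphi(C^\pm)=\int_{\partial^\pm K}|(\bz-\bx)\cdot\bn_K(\bx)|\int_0^1\varphi([\bz,\bx]_s)s^{n-1}\,\dd s\,\mathcal{H}^{n-1}(\dd\bx),
\]
where $C^\pm$ is the cone over $\partial^\pm K$. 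We have $[\bz,K]=C^+$, and $C^+=K\cup C^-$ with $K\cap C^-$ null. Therefore
\[
V_K^\varphi(\bz)=\vol_n^\varphi(C^+)-\vol_n^\varphi(K)=\vol_n^\varphi(C^-) .
\]
Averaging the two expressions, $V_K^\varphi=\tfrac12\bigl(\vol_n^\varphi(C^+)+\vol_n^\varphi(C^-)\bigr)-\tfrac12\vol_n^\varphi(K)$, which is exactly the claimed formula.

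For $\bz\in\interior K$, the same change of variables over all of $\partial K$ gives $\vol_n^\varphi(K)$ as the boundary integral. Thus the right-hand side is $-\tfrac12\vol_n^\varphi(K)+\tfrac12\vol_n^\varphi(K)=0=V_K^\varphi(\bz)$. For $\bz\in\partial K$, the same argument still applies, since all signs are $\le 0$ up to null sets.

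Continuity then follows by dominated convergence. For $\bz$ in a compact set, the integrand is bounded because $\varphi$ is bounded near $K$ on compact subsets of $U$. We need a bound on $\varphi$ along segments that may leave $U$: there the extension is $0$, so it is bounded by $\sup_{U\cap B_R}\varphi$. This supremum could be infinite if $\varphi$ blows up at $\partial U$. Still, $\int_0^1\varphi([\bz,\bx]_s)s^{n-1}\dd s$ integrated against the Jacobian is the $\varphi$-volume of a set, so it is finite by integrability of $\varphi$.

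Pointwise convergence of the integrand as $\bz_k\to\bz$ holds except where a segment passes through $\partial U$ in a non-negligible way. This is handled by continuity of $\varphi$ on $U$ and by the fact that $\partial U$ crossings occur for a.e. $s$ transversally. I would instead argue continuity directly: $V_K^\varphi(\bz)=\int\varphi\,\mathbf 1_{[\bz,K]\setminus K}$, and $\mathbf 1_{[\bz_k,K]}\to\mathbf 1_{[\bz,K]}$ a.e. since $\partial[\bz,K]$ is null. Dominated convergence by $\varphi\in L^1$ then gives continuity.

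The main obstacle is justifying the area formula and the injectivity and decomposition of cones rigorously for a nonsmooth $\partial K$, including the null set of tangential boundary points.
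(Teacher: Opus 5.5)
Your proposal is correct and is essentially the paper's proof. You decompose $[\bz,K]$ into cones from $\bz$ over the lit and unlit parts of $\partial K$ (your labels $\pm$ are swapped relative to the paper's). You use the Jacobian $s^{n-1}|(\bz-\bx)\cdot\bn_K(\bx)|$ to write $V_K^\varphi(\bz)$ once as the lit-side integral and once as the unlit-side integral minus $\vol_n^\varphi(K)$, and then average; checking the case $\bz\in K$ is an extra the paper leaves implicit.

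Your direct continuity argument (Hausdorff convergence of $[\bz_k,K]$, Lebesgue-null boundary, domination by $\varphi\mathbf{1}_U\in L^1$) is cleaner than the paper's appeal to continuity of the integrand, which is delicate when $\varphi$ extended by zero is discontinuous or unbounded at $\partial U$. The only slip is your claim that $\{\bx:(\bz-\bx)\cdot\bn_K(\bx)=0\}$ is $\mathcal{H}^{n-1}$-null. This fails, e.g.\ for $\bz$ in the affine hull of a facet, but it is harmless because the integrand vanishes there.
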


\medskip
In general, the $\varphi$-illumination body need not be convex or bounded. However, if $K$ contains the origin in its interior, then $\cI_\delta^\varphi(K)$ is star-shaped with respect to the origin and converges to $K$ in the Hausdorff metric as $\delta \to 0^+$. In particular, for sufficiently small $\delta$ we have
\[
    K \subset \cI_\delta^\varphi(K) \subset U,
\]
so that $\cI_\delta^\varphi(K)$ is a compact star-shaped set, that is, a \emph{star body}.
Let $\psi : U \to (0,\infty)$ be another continuous and integrable function. Then $\vol_n^\psi(\cI_\delta^\varphi(K) \setminus K)$ is finite for all sufficiently small $\delta$. Our main result shows that this quantity admits an explicit right derivative at $\delta = 0$.

\begin{theorem}\label{thm:main_weighted}
Under the above assumptions,
\begin{align*}
    &\lim_{\delta \to 0^+}
    \frac{\vol_n^{\psi}(\cI_\delta^\varphi(K)) - \vol_n^{\psi}(K)}
         {\delta^{\frac{2}{n+1}}}\\
    &\qquad \qquad \qquad \qquad \qquad = c_n
    \int_{\partial K}
    H_{n-1}(K,\bx)^{\frac{1}{n+1}}
    \varphi(\bx)^{-\frac{2}{n+1}}
    \psi(\bx)
    \, \mathcal{H}^{n-1}(\dd \bx),
\end{align*}
where $c_n$ is defined in \eqref{def:cn}.
\end{theorem}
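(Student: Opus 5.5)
Since $\bo\in\interior K$ and $\cI_\delta^\varphi(K)$ is star-shaped with respect to $\bo$, we pass to polar coordinates centred at $\bo$. Write $\rho_\delta(\bu)$ for the radial function of $\cI_\delta^\varphi(K)$ and $\rho(\bu)$ for that of $K$. Then
\[
    \vol_n^\psi(\cI_\delta^\varphi(K))-\vol_n^\psi(K)=\int_{\SS^{n-1}}\int_{\rho(\bu)}^{\rho_\delta(\bu)}\psi(r\bu)\,r^{n-1}\,\dd r\,\mathcal{H}^{n-1}(\dd\bu).
\]
Next we change variables from $\bu$ to the boundary point $\bx=\rho(\bu)\bu\in\partial K$, which gives $\mathcal{H}^{n-1}(\dd\bu)=\|\bx\|^{-n}(\bx\cdot\bn_K(\bx))\,\mathcal{H}^{n-1}(\dd\bx)$. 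Let $\Delta_\delta(\bx)$ be the Euclidean distance from $\bx$ to $\partial\cI_\delta^\varphi(K)$ along the normal direction $\bn_K(\bx)$. The radial increment $\rho_\delta-\rho$ equals $\Delta_\delta(\bx)\|\bx\|/(\bx\cdot\bn_K(\bx))$ up to $o(\Delta_\delta)$. By continuity of $\psi$, the inner integral is then $\psi(\bx)\|\bx\|^{n-1}(\rho_\delta-\rho)(1+o(1))$, so the integrand reduces to $\psi(\bx)\,\Delta_\delta(\bx)$. It therefore suffices to prove two things: the pointwise limit
\[
    \lim_{\delta\to0^+}\delta^{-\frac{2}{n+1}}\Delta_\delta(\bx)=c_n H_{n-1}(K,\bx)^{\frac{1}{n+1}}\varphi(\bx)^{-\frac{2}{n+1}}
\]
for $\mathcal{H}^{n-1}$-a.e. $\bx$, and a uniform integrable bound on $\delta^{-2/(n+1)}(\rho_\delta-\rho)$, after which dominated convergence finishes the argument.

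\textbf{Pointwise limit.} Fix a normal boundary point $\bx$, i.e. one with a unique normal at which $\partial K$ admits a second order (Alexandrov) expansion; these points have full measure. Consider $\bz=\bx+t\bn_K(\bx)$. By continuity of $\varphi$, the cap body $[\bz,K]\setminus K$ shrinks to $\bx$ as $t\to0$. Hence $V_K^\varphi(\bz)=\varphi(\bx)(1+o(1))\,V_K(\bz)$, where $V_K$ is the unweighted functional of \eqref{eqn:VK_uniform}. The unweighted asymptotics are those of \cite{Werner:1994}: at a point of positive curvature one compares with osculating ellipsoids (approximating from inside and outside by paraboloids with slightly perturbed curvatures) and computes the cap volume of a paraboloid explicitly. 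This yields $V_K(\bz)\sim \kappa_n t^{\frac{n+1}{2}}H_{n-1}^{-1/2}$ for an explicit constant $\kappa_n$. Inverting $\varphi(\bx)V_K(\bz)=\delta$ gives $t\sim c_n(\delta/\varphi(\bx))^{2/(n+1)}H_{n-1}^{1/(n+1)}$, and the constant matches \eqref{def:cn}.

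At points with $H_{n-1}=0$ the same comparison, using flat directions, shows that the cap volume is $\gg t^{(n+1)/2}$, so $\Delta_\delta=o(\delta^{2/(n+1)})$. The argument along the radial direction rather than the normal direction is the same, since the two distances differ by the factor accounted for above.

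\textbf{Domination.} This is the main obstacle. We need $\delta^{-2/(n+1)}(\rho_\delta(\bu)-\rho(\bu))\le C$ uniformly in $\bu$ for small $\delta$. First, $\varphi$ is bounded below by some $m>0$ on a compact neighbourhood of $K$, and $\cI_\delta^\varphi(K)\to K$ in the Hausdorff metric. So for small $\delta$ we have $\cI_\delta^\varphi(K)\subset\cI_{\delta/m}(K)$, and it suffices to bound the classical case. Here I would use the standard rolling-ball argument. Since $\bo\in\interior K$, there is $r>0$ such that for every $\bz$ at distance $s$ outside $K$ along a ray, $[\bz,K]\setminus K$ contains $[\bz, B]\setminus K$ for a ball $B\subset K$ of radius $r$ near $\bx$. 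This cone-type region has volume $\ge c\,s^{(n+1)/2}$; in fact, the worst case is a smooth boundary of bounded curvature, because corners give larger caps. Consequently $s\le C\delta^{2/(n+1)}$, which is the required uniform bound.

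Combining the pointwise limit with dominated convergence yields the formula in Theorem~\ref{thm:main_weighted}.
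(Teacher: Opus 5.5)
You have the same architecture as the paper: the polar-coordinate formula of Lemma~\ref{lem:voldiff}, the local sandwich $(1\pm\varepsilon)\varphi(\bx)$ reducing the pointwise limit to the unweighted limit of \cite{Werner:1994}, the global comparison $\cI_\delta^\varphi(K)\subset\cI_{\delta/m}(K)$, and dominated convergence.

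The genuine gap is the domination step. The bound you aim for, $\delta^{-\frac{2}{n+1}}(\rho_\delta(\bu)-\rho(\bu))\le C$ uniformly in $\bu$, is false in general, already for $\varphi\equiv 1$.

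\begin{itemize}
\item Take $K=\{\bx\in\RR^n:\sum_i|x_i|^p\le 1\}$ with $1<p<2$. Near $\be_1$ the boundary is $x_1\approx 1-\frac1p\sum_{i\ge 2}|x_i|^p$.
\item The affine maps $(x_1,x')\mapsto(1+\lambda^p(x_1-1),\lambda x')$ preserve this local model. Hence the cap $[(1+s)\be_1,K]\setminus K$ has volume $\asymp s^{(n-1+p)/p}$, and so $\rho_\delta(\be_1)-1\asymp\delta^{p/(n-1+p)}$.
\item Since $p<2$, we have $p/(n-1+p)<2/(n+1)$. So your ratio tends to $\infty$ at $\bu=\be_1$.
\item For fixed $\delta$ the ratio is continuous in $\bu$, so it is large on an open set of directions. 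Not even an a.e.\ uniform bound holds.
\end{itemize}

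Your heuristic that corners are the worst case is backwards here. Genuine corners give caps of volume linear in $s$. At smooth points of large curvature, however, the caps are \emph{smaller}, of volume $\asymp s^{\frac{n+1}{2}}H_{n-1}^{-1/2}$. So the increment behaves like $\delta^{\frac{2}{n+1}}H_{n-1}(K,\bx)^{\frac{1}{n+1}}$, which is unbounded whenever the curvature is.

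Likewise, your rolling-ball step needs, for every $\bx\in\partial K$, a ball of a fixed radius $r$ inside $K$ containing $\bx$. That exists only if $rB_2^n$ slides freely in $K$. The only uniform ball that $\bo\in\interior K$ provides, $B_{r_0}(\bo)$, gives just $s\lesssim\delta^{1/n}$, which is too weak.

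The missing idea is a majorant that depends on $\bx$ and is integrable without being bounded. The paper (Lemma~\ref{lem:bounded}) does this as follows:
\begin{itemize}
\item It uses your comparison $\cI_\delta^\varphi(K)\subset\cI_{\delta/m_\varphi}(K)$ together with \cite[Lemma~2]{Werner:1994}.
\item This bounds the integrand by $C\,r_K(\bx)^{-\frac{n-1}{n+1}}$, where $r_K$ is McMullen's rolling function.
\item It then invokes the Sch\"utt--Werner estimate $\int_{\partial K}r_K^{-\alpha}\,\dd\bx<\infty$ for $0\le\alpha<1$ from \cite{SW:1990}, with $\alpha=\frac{n-1}{n+1}$.
\end{itemize}

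Two smaller points.
\begin{itemize}
\item Your detour through the normal distance $\Delta_\delta$ rests on the unproved claim $\rho_\delta-\rho=\Delta_\delta\|\bx\|/(\bx\cdot\bn_K(\bx))+o(\Delta_\delta)$, and it is unnecessary. Collinearity of $\bo,\bx,\bx^\delta$ gives exactly $(\bx^\delta-\bx)\cdot\bn_K(\bx)=\frac{\bx\cdot\bn_K(\bx)}{\|\bx\|}(\|\bx^\delta\|-\|\bx\|)$. The unweighted limit \cite[Lemma~3]{Werner:1994} is then applied along the ray itself.
\item The cap $[\bz,K]\setminus K$ shrinks to $\bx$ only if the face of $K$ with normal $\bn_K(\bx)$ is $\{\bx\}$. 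This is guaranteed when $H_{n-1}(K,\bx)>0$ but fails, e.g., inside a facet. So at points with $H_{n-1}(K,\bx)=0$ you must use the global bound $\varphi\ge m_\varphi$ rather than $\varphi(\bx)(1+o(1))$, as the paper does.
\end{itemize}
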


\subsection{Outline of the paper}

In the next section we recall some basic notation, before proving our main results, Theorems~\ref{thm:VK_weighted} and~\ref{thm:main_weighted}, in Section~\ref{sec:weighted}. We explain how this can be used to derive Theorems~\ref{thm:main_riemannian},~\ref{thm:main_finsler}, and~\ref{thm:main_dual} in Section~\ref{sec:applications}. In Subsection~\ref{sec:Riemannian} we also prove Theorem~\ref{thm:convex_hyperbolic_plane}. In the appendix we give a few explicit examples for the illumination bodies in non-Euclidean spaces.

\section{Preliminaries}

We collect notation and background material used throughout the paper.
We refer to \cite{Gardner:2006,Schneider:2014} for general background on the Brunn--Minkowski theory of convex bodies.

The Euclidean norm of $\bx \in \RR^n$ is denoted by $\|\bx\|$.
The Euclidean unit ball is
\[
    B_2^n = \{\bx \in \RR^n : \|\bx\| \leq 1\},
\]
and the Euclidean unit sphere is $\SS^{n-1} = \partial B_2^n$.
For $r > 0$ and $\bx \in \RR^n$, the closed ball of radius $r$ centered at $\bx$ is
\[
    B_r(\bx) = \{\by \in \RR^n : \|\by - \bx\| \leq r\}.
\]

The Lebesgue measure in $\RR^n$ is denoted by $\lambda_n$, and the Hausdorff measure by $\mathcal{H}^{n-1}$. Integration with respect to these measures will usually be written simply as $\dd\bx$.

\medskip

A convex body $K \subset \RR^n$ containing the origin in its interior is uniquely determined by its \emph{radial function} $\rho(K,\cdot) : \SS^{n-1} \to (0,\infty)$ defined by
\[
    \rho(K,\bu) = \max \{ R \geq 0 : R\bu \in K \}, \qquad \bu \in \SS^{n-1}.
\]
The function $\rho(K,\cdot)$ is continuous. More generally, every continuous function $\rho : \SS^{n-1} \to (0,\infty)$ determines a \emph{star body} $L \subset \RR^n$ such that $\rho = \rho(L,\cdot)$, that is, $L$ is compact, contains the origin $\bo$ in its interior, and satisfies $[\bo,\bx] \subset L$ for all $\bx \in L$.

\medskip

For $\bx \in \partial K$, we denote by $H_{n-1}(K,\bx)$ the Gauss--Kronecker curvature of $\partial K$ at $\bx$, and by $\bn_K(\bx)$ the Euclidean outer unit normal.

\subsection{Riemannian space forms and projective models}

We use a projective model for the space forms $\Sp^n(\lambda)$ of constant curvature $\lambda \in \RR$. Set
\[
    \mathbb{B}_\lambda^n :=
    \begin{cases}
        \interior\, B_{\bo}\!\left(1/\sqrt{-\lambda}\right) & \text{if } \lambda < 0,\\
        \RR^n & \text{if } \lambda \geq 0,
    \end{cases}
\]
and define a Riemannian metric $g^\lambda$ on $\mathbb{B}_\lambda^n$ by
\begin{equation}
    g^\lambda(X_{\bp},Y_{\bp})
    = \frac{X_{\bp}\cdot Y_{\bp}}{1+\lambda\|\bp\|^2}
    - \lambda \frac{(X_{\bp}\cdot \bp)(Y_{\bp}\cdot \bp)}{(1+\lambda\|\bp\|^2)^2},
\end{equation}
for $\bp \in \mathbb{B}_\lambda^n$.

Then $(\mathbb{B}_\lambda^n ,g^\lambda)$ is a Riemannian manifold of constant sectional curvature $\lambda$. By the Killing--Hopf theorem, there exists, up to isometry, a unique simply connected complete Riemannian manifold $\Sp^n(\lambda)$ of constant sectional curvature $\lambda$; see, e.g., \cite[Ch.~6]{KN:1963}, \cite[Thm.~1.9]{Lee:1997}, or \cite[Ch.~8, Cor.~25]{ON:1983}.

Thus, for $\lambda \leq 0$, $(\mathbb{B}_\lambda^n, g^\lambda)$ is isometric to $\Sp^n(\lambda)$, while for $\lambda > 0$ it is isometric to an open hemisphere.

Moreover, $(\mathbb{B}_\lambda^n, g^\lambda)$ is \emph{projective}, i.e., its (unparametrized) geodesics are straight line segments contained in $\mathbb{B}_\lambda^n$. Hence, geodesically convex bodies in $(\mathbb{B}_\lambda^n, g^\lambda)$ coincide with Euclidean convex bodies contained in $\mathbb{B}_\lambda^n$.

The Riemannian volume measure $\vol_n^\lambda$ has density
\begin{equation}
    \varphi_\lambda(\bp)
    = (1+\lambda\|\bp\|^2)^{-\frac{n+1}{2}},
\end{equation}
see \cite{BW:2018}.

Thus, for $\lambda \leq 0$, the $\lambda$-illumination body satisfies
\begin{equation}\label{eqn:space_form_illumination_is_weighted}
    \cI^\lambda_{\delta}(K) = \cI^{\varphi_\lambda}_\delta(K).
\end{equation}
For $\lambda > 0$, this identity holds as long as $\cI^{\varphi_\lambda}_\delta(K)$ is bounded, which is the case for all sufficiently small $\delta$.

We also recall from \cite{BW:2018} that
\begin{equation}\label{eqn:lboundary}
	\vol_{\partial K}^\lambda(\dd\bx)
    = \sqrt{\frac{1+\lambda\, (\bn_K^e(\bx)\cdot \bx)^2}{(1+\lambda\, \|\bx\|^2)^{n}}} \, \dd\bx,
\end{equation}
and for normal boundary points $\bx \in \partial K$,
\begin{equation}\label{eqn:lgauss}
	H_{n-1}^\lambda(K,\bx)
    = H_{n-1}^e(K,\bx)
    \left(\frac{1+\lambda\|\bx\|^2}{1+\lambda\, (\bn_K^e(\bx)\cdot \bx)^2}\right)^{\frac{n+1}{2}}.
\end{equation}
Here $\bn_K^e(\bx)\in\SS^{n-1}$ denotes the outward unit normal to $\partial K$ at $\bx$ in the Euclidean space $\RR^n$, and $H_{n-1}^\lambda(K,\cdot)$, respectively $H_{n-1}^e(K,\cdot)$, denotes the (generalized) Gauss--Kronecker curvature of $\partial K$ as hypersurface of $(\mathbb{B}^n_\lambda,g^\lambda)$, respectively the Euclidean space  $\RR^n$.

\subsection{Notions of volume in Finsler geometries}\label{sec:Background_Finsler}

We recall some standard definitions of volume on a Finsler geometry $(X,F)$.

\subsubsection{Busemann definition of volume}
In the Busemann definition \cite{Busemann:1946} one chooses a density on the normed vector space $(T_{\bp}X, \|\cdot\|_{\bp})$ such that the unit ball $B(\bp)=\{X_{\bp}\in T_{\bp}X: \|X_{\bp}\|_{\bp}\leq 1\}$ with respect to $\|\cdot\|_{\bp}=F(\bp,\cdot)$ has volume $\vol_n(B_2^n)$. Thus
\begin{equation*}
    \varphi_{\mathrm{B}}(\bp) = \frac{\vol_n(B_2^n)}{\vol_n(B_{\bp})}.
\end{equation*}
This definition of volume coincides with the $n$-dimensional Hausdorff measure, see \cite[Ex.\ 5.5.12]{BBI:2001}.

\subsubsection{Holmes--Thompson definition of volume}
In the Holmes--Thompson definition \cite{HT:1979} we normalize the dual unit ball $B_{\bp}^* = \{X_{\bp}^*\in (T_{\bp}X)^* : X_{\bp}^*(X_{\bp})\leq 1 \text{ for all $X_{\bp}\in B_{\bp}$} \}$ and therefore
\begin{equation*}
    \varphi_{\mathrm{HT}}(\bp) = \frac{\vol_n(B_{\bp}^*)}{\vol_n(B_2^n)}.
\end{equation*}
This is related to the natural symplectic structure on $X\times X^*$ and the symplectic volume of $B_{\bp}\times B_{\bp}^*$; see \cite{AP:2005}.

\subsubsection{Gromov mass and comass definition of volume}
In the Gromov mass definition \cite{Gromov:1983} one normalizes by the maximum of the volume of the affine images of cross-polytopes that are contained in $B_{\bp}$, and in the Gromov comass definition one normalizes by the minimum of the volume of the affine images of cubes that contain $B_{\bp}$. Thus, if $P=\mathrm{conv}\{\pm \be_1,\dotsc,\pm\be_n\}$ denotes the cross-polytope in $\RR^n$, then
\begin{equation*}
    \varphi_{\mathrm{mass}}(\bp) = \frac{\vol_n(P)}{\max\{ \vol_n( A P) : A\in\mathrm{GL}_n \text{ and $AP\subset B_{\bp}$}\}}
\end{equation*}
and, if $C=[-1,1]^n$ denotes the cube, then
\begin{equation*}
    \varphi_{\mathrm{comass}}(\bp) = \frac{\vol_n(C)}{\min\{ \vol_n( A C) : A\in\mathrm{GL}_n \text{ and $AC\supset B_{\bp}$}\}}
\end{equation*}
Similarly, one could choose ellipsoids instead of cross-polytopes, or cubes, and obtain minimal and maximal Riemannian volumes, see \cite[Ex.\ 5.5.15]{BBI:2001}.

\subsection{Hilbert geometries}\label{sec:Hilbert_geometries}

Hilbert geometries are classical examples of projective Finsler geometries defined on an open and convex domain $X\subset \RR^n$; we refer to \cite{Papadopoulos:2014, PT:2014} for more details and background on Hilbert and Finsler geometries.

The Finsler norm $H$ on $TX$ is defined by the harmonic mean
\begin{equation*}
    H(\bp, X_{\bp}) = \frac{1}{2} \left(t_+(\bp,X_{\bp})^{-1} + t_-(\bp,X_{\bp})^{-1}\right)
\end{equation*}
for all $\bp\in X$ and  $X_{\bp}\in T_{\bp}X$, where
\begin{equation*}
    t_{\pm}(\bp,X_{\bp}) = \sup\{r\geq 0: \bp\pm rX_{\bp} \in X\},
\end{equation*}
i.e., the line through $\bp$ in direction $X_{\bp}$ intersects $\partial X$ in the points $\ba,\bb$
such that $(\ba,\bp,\bb)$ are exactly in this order on the line segment, then $t_+(\bp,X_{\bp}) = \|\bb-\bp\|$ and $t_-(\bp,X_{\bp})=\|\ba-\bp\|$; see also Figure~\ref{fig:hilbert_norm}.
The norm $\|\cdot\|_{\bp}=H(\bp,\cdot)$ defined on $T_{\bp} X$ can also be expressed by
\begin{equation*}
    \|X_{\bp}\|_{\bp} = \max \{ Y_{\bp}^*(X_{\bp}) : Y_{\bp}^* \in D\left[(X-\bp)^*\right]\},
\end{equation*}
where $D[K] = \frac{1}{2} \{\bx-\by: \bx,\by\in K\}$ denotes the difference body of a convex body, $K^*$ is the dual body that is
$K^*= \{\by^*\in(\RR^n)^* : \by^*(\bx)\leq 1 \text{ for all $\bx\in K$}\}$, and we identify $(\RR^n)^*$ with $(T_{\bp} X)^*$; see \cite{PT:2009}.

For $\bp,\bq\in X$ the induced distance function is
\begin{equation*}
    d(\bp,\bq) = \frac{1}{2} \log\left( \frac{\|\ba-\bp\|}{\|\ba-\bq\|} \frac{\|\bb-\bp\|}{\|\bb-\bq\|}\right),
\end{equation*}
where $\ba,\bb$ are the intersection points of $\partial X$ with the line spanned by $\bp,\bq$ and the points $(\ba,\bp,\bq,\bb)$ are exactly in this order on the line segment.

\begin{figure}[t]
\centering
\begin{tikzpicture}[scale=1.5]

% convex domain
\fill[orange,opacity=.3]
  (2,0)
  .. controls (3,1) and (2,4) .. (1,4)
  .. controls (0,4) and (-2,2) .. (-2,1)
  .. controls (-2,0) and (-2,-1) .. (-1,-1)
  .. controls (0,-1) and (1,-1) .. cycle;

% Boundary of the convex domain
\draw[dashed]
  (2,0)
  .. controls (3,1) and (2,4) .. (1,4)
  .. controls (0,4) and (-2,2) .. (-2,1)
  .. controls (-2,0) and (-2,-1) .. (-1,-1)
  .. controls (0,-1) and (1,-1) .. cycle;

% Line through p
\draw[dotted] (-2.0,0.7)
    --(0.5,1.65) node[midway,above, rotate=20] {$t_-(\bp,X_{\bp})$}
    --(2.25,2.3) node[midway,above, rotate=20] {$t_+(\bp,X_{\bp})$};

\draw[thick, ->] (0.5,1.65) --++ ({1.75/3},{0.65/3})
    node[below] {$X_{\bp}$};

% Interior point p
\fill (0.5,1.65) circle (1pt) node[below] {$\bp$};

% Intersection points (chosen consistently with the line)
\fill (-2.0,0.7) circle (1pt) node[left] {$\ba$};

\fill (2.25,2.3) circle (1pt) node[right] {$\bb$};

\node at (0,0) {$X$};
\end{tikzpicture}
\caption{In a Hilbert geometry $(X,H)$ the Finsler norm $H(\bp,X_{\bp})=\|X_{\bp}\|_{\bp}$ for $X_{\bp}\in T_{\bp}(X)$ is defined as the harmonic mean of the distances $t_+(\bp,X_{\bp})$ and $t_-(\bp,X_{\bp})$ to the boundary $\partial X$ on the line through $\bp$ in direction $X_{\bp}$.}
\label{fig:hilbert_norm}
\end{figure}

\section{Proof of the main results} \label{sec:weighted}

\subsection{Proof of Theorem~\ref{thm:VK_weighted}}
    \begin{proof}%[Proof of Theorem~\ref{thm:VK_weighted}]

For $K\in\cK_0(\RR^n)$ and $\bz\not\in K$ we partition $\partial K$ into
the \emph{frontside boundary}
\begin{equation*}
    \partial K^+(\bz) := (\interior [\bz,K]) \cap \partial K,
\end{equation*}
and the \emph{backside boundary}
\begin{equation*}
    \partial K^-(\bz) := (\partial [\bz,K])\cap \partial K,
\end{equation*}
see Figure~\ref{fig:partial_K}.

We may parametrize $\by\in (\interior [\bz,K])\setminus K$ by $\by(\bx,s)=[\bz,\bx]_s=(1-s)\bz+s\bx$ for $s\in(0,1)$ and $\bx\in K^+(\bz)$.
The Jacobian is $J\by(\bx,s)=s^{n-1} [(\bz-\bx)\cdot\bn_K(\bx)]$ for almost all $\bx\in K^+(\bz)$.
Using this we find
\begin{align*}
    V_K^\varphi(\bz)
    &= \vol_n^\varphi([\bz,K]\setminus K)
    = \int_{([\bz,K]\setminus K)\cap U} \varphi\, \dd\lambda_n\\
    &= \int_{\partial K^+(\bz)} |(\bz-\bx)\cdot \bn_K(\bx)| \left(\int_{0}^{1} \mathbf{1}_{U}([\bz,\bx]_s) \varphi([\bz,\bx]_s) s^{n-1}\, \dd s\right) \,\dd \bx,
\end{align*}
where we note that $[\bz,\bx]_s \not \in K$ for $s\in(0,1)$ since $\bx\in\partial K^+(\bz)$.

Similarly, by extending the rays from $\bz$ to meet the backside, we also find
\begin{align*}
    &V_K^{\varphi}(\bz)+\vol_n^\varphi(K) = \vol_n^\varphi([\bz,K])\\
    &\quad = \int_{\partial K^-(\bz)} |(\bx-\bz)\cdot\bn_K(\bx)| \left(\int_{0}^1 \mathbf{1}_{U}([\bz,\bx]_s) \varphi([\bz,\bx]_s) s^{n-1}\, \dd s\right)\dd\bx.
\end{align*}
Thus
\begin{align*}
    V_K^\varphi(\bz) &= -\frac{1}{2} \vol_n^\varphi(K) + \frac{1}{2} \int_{\partial K^+(\bz)\cup \partial K^-(\bz)}  \left|(\bx-\bz)\cdot\bn_K(\bx)\right|\\
    &\qquad\qquad\qquad\qquad\qquad \times \left(\int_{0}^1 \mathbf{1}_{U}([\bz,\bx]_s)  \varphi([\bz,\bx]_s) s^{n-1}\, \dd s\right)\dd\bx.
\end{align*}
This completes the proof, since $\partial K = \partial K^+(\bz)\cup \partial K^-(\bz)$ and $\bz\mapsto |(\bz-\bx)\cdot \bn_K(\bx)|$, as well as $\bz \mapsto \int_{0}^1 \mathbf{1}_{U}([\bz,\bx]_s)  \varphi([\bz,\bx]_s) s^{n-1}\, \dd s$, are continuous functions.
\end{proof}

\begin{figure}[t]
\centering
\begin{tikzpicture}[scale=0.65, rotate=-60]

\fill[blue,opacity=.3]
(7,3)
  .. controls (8,4)  and (8,4)  .. (9,5)
  .. controls (10,6) and (10,8) .. (9,10)
  .. controls (8,12) and (5,12) .. (4,9)
  .. controls (3,6) and (3,6) .. (2,3)
  .. controls (1,0)  and (6,2)  .. cycle;

\fill[orange,opacity=0.3]
    (0,-3) --
        (2,3) .. controls (1,0)  and (6,2)  .. (7,3)
        --cycle;

\draw[dashed] (0,-3) -- (7,3);
\draw[dashed] (0,-3) -- (2,3);

% Boundary of the convex domain
\draw[thick, blue]
  (7,3)
  .. controls (8,4)  and (8,4)  .. (9,5)
  .. controls (10,6) and (10,8) .. (9,10)
  .. controls (8,12) and (5,12) .. (4,9)
  .. controls (3,6) and (3,6) .. (2,3);

\draw[thick,red]
(2,3) .. controls (1,0)  and (6,2)  .. (7,3);

\fill (0,-3) circle (3pt) node[left] {$\bz$};

\node at (5,5) {$K$};
\node[red] at (3.5,2.5) {$\partial K^+(\bz)$};
\node[blue] at (10.5,10) {$\partial K^-(\bz)$};

\end{tikzpicture}
\caption{The frontside boundary $\partial K^+(\bz)$ (in red) and backside boundary $\partial K^-(\bz)$ (in blue) as seen from $\bz\not\in K$ partition the boundary of $K$.}
\label{fig:partial_K}
\end{figure}

\subsection{Proof of Theorem ~\ref{thm:main_weighted}}
The proof of Theorem~\ref{thm:main_weighted} follows by establishing the following three lemmas.

First, we have the following classical result, see, for example, \cite[Lem.\ 4.2]{BFH:2010} or \cite[Lem.\ 4.4]{WY:2010} for similar statements.
\begin{lemma}\label{lem:voldiff}
    Let $K\in\cK_0(\RR^n)$ such that $\bo\in\interior K$ and let $L\subset \RR^n$ be a star body such that $K\subset L$.
    Furthermore, let $U\subset L$ be open and let $\psi:U\to [0,+\infty)$ be continuous and integrable.
    Then
    \[
        \vol^\psi_n(L) - \vol^\psi_n(K)
        = \int _{\partial K}  \frac{\bx \cdot \bn_K(\bx)}{\|\bx\|^n} \left(\int_{\|\bx\|}^{\|\bx^L\|} \psi\left(t\frac{\bx}{\|\bx\|}\right)t^{n-1}\dd t\right) \, \dd\bx,
    \]
    where $\bx^L=\{r\bx: r\geq 0\} \cap \partial L$.
\end{lemma}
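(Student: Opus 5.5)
Since $\bo\in\interior K$ and both $K$ and $L$ are star bodies with respect to $\bo$, I will compute the two volumes in polar coordinates and subtract, then move the resulting integral over $\SS^{n-1}$ onto $\partial K$ with the radial map. I read the hypothesis on $U$ as $U\supset L$ (or extend $\psi$ by zero off $U$, as in Theorem~\ref{thm:VK_weighted}), so that the integrand makes sense on $L$.

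\emph{Step 1: polar coordinates.} Integrating in polar coordinates $\by=t\bu$ with $\bu\in\SS^{n-1}$ and $t\ge 0$ gives
\[
\vol_n^\psi(L)-\vol_n^\psi(K)=\int_{\SS^{n-1}}\int_{\rho(K,\bu)}^{\rho(L,\bu)}\psi(t\bu)\,t^{n-1}\,\dd t\,\mathcal{H}^{n-1}(\dd\bu).
\]
This uses only $K\subset L$, so that $\rho(K,\cdot)\le\rho(L,\cdot)$. Integrability of $\psi$ makes every quantity here finite, and Tonelli applies because $\psi\ge0$.

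\emph{Step 2: change of variables to $\partial K$.} The radial projection $\pi:\partial K\to\SS^{n-1}$, $\pi(\bx)=\bx/\|\bx\|$, is a bi-Lipschitz homeomorphism because $\bo\in\interior K$. Its inverse is $\bu\mapsto\rho(K,\bu)\bu$, and $\rho(K,\cdot)$ is Lipschitz on the sphere. I then need the classical Jacobian identity
\[
\mathcal{H}^{n-1}(\dd\bu)=\frac{\bx\cdot\bn_K(\bx)}{\|\bx\|^{n}}\,\mathcal{H}^{n-1}(\dd\bx),
\]
valid at every $\bx\in\partial K$ where the normal is unique, which is $\mathcal{H}^{n-1}$-almost everywhere. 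This is the cone-volume/solid-angle identity: the cone from $\bo$ over a small patch $\dd\bx$ has volume $\tfrac1n(\bx\cdot\bn_K(\bx))\,\dd\bx$, while the same cone, truncated at the unit sphere, has volume $\tfrac1n\|\bx\|^n\,\dd\bu$. To make this rigorous I would apply the area formula to the Lipschitz map $\pi$ and check its tangential Jacobian at a differentiability point, which is $\cos\angle(\bx,\bn_K(\bx))/\|\bx\|^{n-1}$ with $\cos\angle(\bx,\bn_K(\bx))=\bx\cdot\bn_K(\bx)/\|\bx\|$.

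\emph{Step 3: substitute.} With $\bu=\bx/\|\bx\|$ we have $\rho(K,\bu)=\|\bx\|$ and $\rho(L,\bu)=\|\bx^L\|$. Substituting these into Step 1 gives exactly the formula claimed in the lemma.

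The main technical point is Step 2: justifying the Jacobian almost everywhere, and checking that the non-smooth points of $\partial K$ form a null set that can be ignored. This is standard, since $\partial K$ is Lipschitz and the area formula handles it. Everything else is bookkeeping with nonnegative measurable integrands.
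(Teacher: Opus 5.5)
Your proposal is correct and is essentially the paper's argument. The paper parametrizes $L\setminus\interior K$ in one step by $(\bx,t)\mapsto t\bx/\|\bx\|$ for $\bx\in\partial K$ and $\|\bx\|\le t\le\|\bx^L\|$, with almost-everywhere Jacobian $(t/\|\bx\|)^{n-1}\,\bx\cdot\bn_K(\bx)/\|\bx\|$; this is exactly your polar-coordinate factor $t^{n-1}$ times your radial-projection Jacobian $\bx\cdot\bn_K(\bx)/\|\bx\|^n$, so your two steps just factor the same change of variables.
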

\begin{proof}
    The points $\by\in L\setminus (\interior K)$ can be parametrized by $\by(\bx,t) = t\frac{\bx}{\|\bx\|}$ for $\bx\in\partial K$ and $\|\bx\|\leq t \leq \|\bx^L\|$.
    The map $\by$ is bilipschitz and for almost all $\bx\in\partial K$ the Jacobian is $J\by(\bx,t)=(t/\|\bx\|)^{n-1} (\frac{\bx}{\|\bx\|}\cdot \bn_K(\bx))$.
    This completes the proof.
\end{proof}

Thus we can write
\begin{align}
    &\lim_{\delta \to 0^+} \frac{\vol^\psi_n\left(\cI_\delta^\varphi(K)\right) - \vol^\psi_n(K)}{\delta^\frac{2}{n+1}}
        \notag\\
    &\qquad= \lim_{\delta \to 0^+} \int_{\partial K}
        \frac{\bx \cdot \bn_K(\bx)}{\delta^\frac{2}{n+1} \|\bx\|^n} \left(\int_{\|\bx\|}^{\|\bx^\delta\|} \psi\left(t\frac{\bx}{\|\bx\|}\right)t^{n-1}\dd t\right)\, \dd\bx
        \label{eqn:step1}
\end{align}
where $\bx^\delta=\{r \bx: r \geq 0\} \cap \partial \cI_\delta^\varphi(K)$.

Next, we want to interchange integration and limit on the right hand side. To do so,  we recall the \emph{rolling function} $r_K:\partial K \to [0,\infty)$ of a convex body $K$, which  was introduced by McMullen in \cite{McMullen:1974}, see also \cite{SW:1990}.
For $\bx \in \partial K$ with unique outer normal $\bn_K(\bx)$ it is defined by
\begin{equation*}
	r_K(\bx) = \max\{r: B^{n}_2 (\bx - r\,  \bn_{K} (\bx), r ) \subset K\},
\end{equation*}
i.e., $r_K(\bx)$ is the maximal radius of a Euclidean ball inside $K$ that contains $\bx$.
If $\bn_K(\bx)$ is not unique, then we set $r_K(\bx) =0$. By McMullen \cite{McMullen:1974} (also \cite{SW:1990}),  $r_K(\bx)>0$  almost everywhere on $\partial K$.
It was shown in \cite{SW:1990} that for all $0 \leq \alpha <1$,
\begin{equation}\label{eqn:rolling_function}
    \int_{\partial K}r_K(\bx)^{-\alpha} \,\dd\bx < \infty.
\end{equation}

Our next lemma lets us apply this bound in our situation.
\begin{lemma}\label{lem:bounded}
There exists $\delta_0>0$ and $C>0$ such that for all $\bx \in \partial K$ and $\delta\in(0,\delta_0)$ we have
\begin{equation*}
    \frac{\bx \cdot \bn_K(\bx)}{\delta^\frac{2}{n+1} \|\bx\|^n} \left(\int_{\|\bx\|}^{\|\bx^\delta\|} \psi\left(t\frac{\bx}{\|\bx\|}\right)t^{n-1}\dd t\right)
    \leq C r_K(\bx)^{-\frac{n-1}{n+1}}.
\end{equation*}
\end{lemma}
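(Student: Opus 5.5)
The plan is to reduce the bound to an estimate on the radial overshoot $\|\bx^\delta\|-\|\bx\|$, and then to control that overshoot using the ball of radius $r_K(\bx)$ rolling inside $K$ at $\bx$.

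\emph{Step 1: reduction to the radial overshoot.} Since $\bo\in\interior K$ and $\cI_\delta^\varphi(K)\to K$ in the Hausdorff metric, we fix $\delta_0$ so that $\cI_{\delta_0}^\varphi(K)$ lies in a compact set $M\subset U$. On $M$ we have $\psi\le\|\psi\|_{L^\infty(M)}$ and $\varphi\ge m>0$. Moreover $\bx\cdot\bn_K(\bx)\le \max_{\partial K}\|\bx\|$, and $\|\bx\|\ge \rho_0>0$. Hence the left-hand side of the claimed inequality is at most
\[
C_1\,\frac{\|\bx^\delta\|-\|\bx\|}{\delta^{2/(n+1)}},
\]
and it suffices to show $\|\bx^\delta\|-\|\bx\|\le C_2\,\delta^{2/(n+1)} r_K(\bx)^{-(n-1)/(n+1)}$. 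We may assume $r_K(\bx)>0$, since otherwise the right-hand side is $+\infty$. Since $\bx$ ranges over a compact set and $\bo$ is interior, $\bx\cdot\bn_K(\bx)\ge c_0>0$. So the radial distance and the normal distance from $\bx$ to a point $\bz=\bx+h\bx/\|\bx\|$ are comparable: $\mathrm{dist}$ along the normal is $h\,\bx\cdot\bn_K(\bx)/\|\bx\|\ge c h$.

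\emph{Step 2: lower bound on $V_K^\varphi(\bz)$ via the inscribed ball.} Let $r=r_K(\bx)$ and $B=B_r(\bx-r\bn_K(\bx))\subset K$, and take $\bz=\bx+h\bx/\|\bx\|$. Then $[\bz,K]\setminus K\supset [\bz,B]\setminus K$. The main point is that $\bz$ sees a cap region near $\bx$ whose volume is of order $h^{(n+1)/2}r^{(n-1)/2}$ and which lies outside $K$. I would show that $[\bz,B]\cap H^+$, where $H^+$ is the open half-space beyond the supporting hyperplane of $K$ at $\bx$ (so $H^+\cap K=\emptyset$), contains a cone with apex $\bz$ over an $(n-1)$-ball in $H$ of radius $\gtrsim\sqrt{rh}$. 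This holds as long as $h\lesssim r$, because the tangent cone from $\bz$ to the ball cuts $H$ in a ball of radius about $\sqrt{2r\,ch}$. That cone has volume $\gtrsim h\,(rh)^{(n-1)/2}$, so
\[
V_K^\varphi(\bz)\ge m\,c_3\,h^{\frac{n+1}{2}}r^{\frac{n-1}{2}} .
\]
Choosing $h=C_2\delta^{2/(n+1)}r^{-(n-1)/(n+1)}$ with $C_2$ large gives $V_K^\varphi(\bz)>\delta$. Since $\cI_\delta^\varphi(K)$ is star-shaped, this yields $\|\bx^\delta\|-\|\bx\|<h$, which is the desired bound.

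\emph{Step 3: the regime $h\gtrsim r$ (main obstacle).} The cone estimate above requires $h\lesssim r$. For larger $h$ I would first use monotonicity, $[\bz',K]\subset[\bz,K]$ for $\bz'$ on the segment $[\bx,\bz]$. I would then argue uniformly: for $h\ge c r$ the cap contains a cone over a ball of radius $\gtrsim r$ and height $\gtrsim h$, giving $V\gtrsim h r^{n-1}$. Alternatively, I would use the crude uniform bound $\|\bx^\delta\|-\|\bx\|\le \epsilon(\delta)\to0$, which comes from uniform Hausdorff convergence together with the inequality $V_K^\varphi(\bz)\ge c\,\mathrm{dist}(\bz,K)^{n}$ obtained from a fixed inscribed ball around $\bo$. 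That inequality gives $\|\bx^\delta\|-\|\bx\|\lesssim \delta^{1/n}$.

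The bookkeeping then goes case by case. If $r\ge h$, Step 2 applies. If $r<h$, the bound $V\gtrsim hr^{n-1}$ must still force $h\lesssim\delta^{2/(n+1)}r^{-(n-1)/(n+1)}$, and this should reduce to $r\lesssim h$. I expect this case analysis to be the delicate part. I would try first to use $r_K\le R_0$ bounded and the fact that the cone over the ball of radius $\min(\sqrt{rh},r)$ always works, and then check that the resulting exponents close up.
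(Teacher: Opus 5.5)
\paragraph{Comparison with the paper.} Your route differs from the paper's. The paper does not redo any geometry. It uses $m_\varphi V_K\le V_K^\varphi$ on $L=\cI^\varphi_{\delta_1}(K)$ to get $\cI_\delta^\varphi(K)\subset\cI_{\delta/m_\varphi}(K)$, bounds $\psi\le M_\psi$, evaluates $\int t^{n-1}\,\dd t$, and then quotes Werner's Lemma~2, which is exactly this estimate for unweighted illumination bodies. Your Steps~1--2 reprove that Euclidean estimate by hand from the rolling ball. That is a legitimate, self-contained alternative, and the cone estimate in Step~2 is sound for $h\lesssim r$ up to constants.

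\paragraph{Small repair.} The bound $\varphi\ge m$ is only available on $M$. Your test point $\bz=\bx+h\bx/\|\bx\|$ with $h=C_2\delta^{2/(n+1)}r^{-(n-1)/(n+1)}$ can leave $M$ when $r$ is small. Apply the lower bounds at $\bz=\bx^\delta$ itself instead, with $h=\|\bx^\delta\|-\|\bx\|$. This uses $\delta\ge V_K^\varphi(\bx^\delta)\ge m\,V_K(\bx^\delta)$, which is valid because $[\bx^\delta,K]\subset\cI_\delta^\varphi(K)\subset M$.

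\paragraph{The gap.} As written, the proof does not close in the regime $r=r_K(\bx)<h$, and that is precisely the case you leave open. You need
\[
V_K(\bz)\gtrsim h^{\frac{n+1}{2}}r^{\frac{n-1}{2}}
\]
uniformly. The rolling-ball bound $V\gtrsim h\,r^{n-1}$ only gives $h^{\frac{n+1}{2}}r^{\frac{n-1}{2}}\cdot (r/h)^{\frac{n-1}{2}}$. It therefore loses the unbounded factor $(h/r)^{\frac{n-1}{2}}$ as $r\to0$. Equivalently, it yields $h\lesssim\delta\, r^{-(n-1)}$, which is weaker than the target whenever $\delta\gg r^n$. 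The condition that would make it work is $h\lesssim r$, not $r\lesssim h$ as you write, so this case analysis cannot close using the inscribed ball at $\bx$ alone.

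\paragraph{The fix.} Use the crude estimate you mention only as an alternative. Since $K$ lies below its supporting hyperplane at $\bx$ and $\bx\cdot\bn_K(\bx)\ge\rho_0$, you have $\mathrm{dist}(\bz,K)\ge(\bz-\bx)\cdot\bn_K(\bx)\ge c\,h$. The cone $[\bz,B_{\rho_0}(\bo)]$, truncated at radius $\mathrm{dist}(\bz,K)$, then gives $V_K(\bz)\ge c'h^n$. For $r<h$ we have $h^n\ge h^{\frac{n+1}{2}}r^{\frac{n-1}{2}}$, so the unified lower bound holds in both regimes. Equivalently, $h\lesssim\delta^{1/n}$ together with $r<h$ gives
\[
\delta^{\frac{2}{n+1}}r^{-\frac{n-1}{n+1}}>\delta^{\frac{2}{n+1}}h^{-\frac{n-1}{n+1}}\gtrsim\delta^{\frac{2}{n+1}-\frac{n-1}{n(n+1)}}=\delta^{\frac1n}\gtrsim h .
\]
With this two-line case split added, your argument is complete.
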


\begin{proof}
Since $\cI_\delta^\varphi(K)$ converges to $K$ in the Hausdorff metric as $\delta\to 0^+$ there exists $\delta_1>0$ such that $L:=\cI_{\delta_1}^\varphi(K)\subset U$ is a compact star body. Since $\varphi:U\to (0,+\infty)$ is continuous, there exist constants $0<m_{\varphi}\leq M_\varphi$ such that
\begin{equation*}
    m_{\varphi}\leq \varphi(\by) \leq M_\varphi\qquad \text{for all $\by \in L$},
\end{equation*}
and similarly there exist constants $0<m_\psi\leq M_\psi$ such that
\begin{equation*}
    m_\psi \leq \psi(\by) \leq M_\psi \qquad \text{for all $\by \in L$}.
\end{equation*}
Thus
\begin{equation*}
    m_\varphi V_K(\bz) \leq V_K^\varphi(\bz) =\vol_n^\varphi([\bz,K]\setminus K) \leq M_\varphi V_K(\bz),
\end{equation*}
for all $\bz \in L\setminus K$. The sublevel sets therefore yield
\begin{equation}\label{eqn:compare_uniform}
    \cI_{\delta/M_\varphi}(K)\subset \cI_{\delta}^\varphi(K) \subset \cI_{\delta/m_\varphi}(K),
\end{equation}
for $\delta \in (0,\delta_1)$.

We set $\tilde{\delta} =\frac{\delta}{m_\varphi}$ and $\tilde{\bx}^{\tilde{\delta}} = \{r\bx:r\geq 0\}\cap \partial \cI_{\tilde{\delta}}(K)$. Note that $\|\tilde{\bx}^{\tilde{\delta}}\|\geq \|\bx^\delta\|$ by \eqref{eqn:compare_uniform}.

Thus
\begin{align*}
    \frac{\bx \cdot \bn_K(\bx)}{\delta^\frac{2}{n+1} \|\bx\|^n} \left(\int_{\|\bx\|}^{\|\bx^\delta\|} \psi\left(t\frac{\bx}{\|\bx\|}\right)t^{n-1}\dd t\right)
    &\leq M_\psi \frac{\bx \cdot \bn_K(\bx)}{\delta^\frac{2}{n+1} \|\bx\|^n} \left(\int_{\|\bx\|}^{\|\tilde{\bx}^{\tilde{\delta}}\|}t^{n-1}\dd t\right)\\
    &= \frac{M_\psi \, (\bx \cdot \bn_K(\bx))}{n\, (m_\varphi \tilde{\delta})^\frac{2}{n+1}}
        \left[\left(\frac{\|\tilde{\bx}^{\tilde{\delta}}\|}{\|\bx\|}\right)^{\!\!n}-1\right]
\end{align*}
By Lemma 2 in \cite{Werner:1994} there exists $\gamma>0$ and $\delta_2>0$ such that
\begin{equation*}
    \frac{\bx\cdot\bn_K(\bx)}{n\, \tilde{\delta}^\frac{2}{n+1}} \left[\left(\frac{\|\tilde{\bx}^{\tilde{\delta}}\|}{\|\bx\|}\right)^n-1\right]\leq \gamma r_K(\bx)^{-\frac{n-1}{n+1}},
\end{equation*}
for all $\bx\in\partial K$ and $\tilde{\delta} \leq \delta_2$.
Setting $C= M_\psi m_\varphi^{-\frac{2}{n+1}} \gamma$ and $\delta_0 = \min\{\delta_1, m_\varphi\delta_2\}$ completes the proof.
\end{proof}

By the previous lemma and (\ref{eqn:rolling_function}) we may apply Lebesgue's Dominated Convergence Theorem to interchange integration and limit in \eqref{eqn:step1} and obtain
\begin{align*}
    &\lim_{\delta \to 0^+} \frac{\vol^\psi_n\left(\cI_\delta^\varphi(K)\right) - \vol^\psi_n(K)}{\delta^\frac{2}{n+1}}\\
    &\qquad =  \int_{\partial K} \lim_{\delta \to 0^+} \frac{\bx \cdot \bn_K(\bx)}{\delta^\frac{2}{n+1} \|\bx\|^n}
        \left(\int_{\|\bx\|}^{\|\bx^\delta\|} \psi\left(t\frac{\bx}{\|\bx\|}\right) t^{n-1}\, \dd t \right) \,\dd \bx.
\end{align*}
Since $\bx^\delta \to \bx$ as $\delta \to 0^+$ we find
\begin{equation*}
    \lim_{\delta\to 0^+ } \frac{1}{\|\bx^\delta\|-\|\bx\|} \int_{\|\bx\|}^{\|\bx^\delta\|} \psi\left(t\frac{\bx}{\|\bx\|}\right) t^{n-1}\, \dd t = \psi(\bx) \|\bx\|^{n-1}.
\end{equation*}
Furthermore, since $\bx,\bx^\delta$ and $\bo$ are colinear, we have
\begin{equation*}
    \frac{\bx\cdot\bn_K}{\|\bx\|} (\|\bx^\delta\|-\|\bx\|) = \frac{(\bx^\delta-\bx)\cdot\bn_K}{\|\bx^\delta-\bx\|} \|\bx^\delta-\bx\| = (\bx^\delta-\bx)\cdot\bn_K.
\end{equation*}
Thus
\begin{equation}\label{eqn:step2}
    \lim_{\delta \to 0^+} \frac{\vol^\psi_n\left(\cI_\delta^\varphi(K)\right) - \vol^\psi_n(K)}{\delta^\frac{2}{n+1}}\\
     = \int_{\partial K} \lim_{\delta\to 0^+} \frac{(\bx^\delta-\bx)\cdot \bn_K(\bx)}{\delta^{\frac{2}{n+1}}} \,\psi(x) \, \dd\bx.
\end{equation}

Next we treat the limit under the integral.

\begin{lemma}\label{lem:limit}
For almost all $\bx\in\partial K$ the limit
\begin{equation*}
    \lim_{\delta \to 0^+}  \frac{(\bx^\delta-\bx)\cdot\bn_K(\bx)}{\delta^{\frac{2}{n+1}}}
\end{equation*}
exists and is equal to
\begin{equation*}
    c_n\, H_{n-1}(K,\bx)^\frac{1}{n+1} \, \varphi(\bx)^{-\frac{2}{n+1}},
\end{equation*}
where $c_n$ is defined as in \eqref{def:cn}.
\end{lemma}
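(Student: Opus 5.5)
Away from $\bx$ the weight $\varphi$ does not matter, and near $\bx$ it is essentially the constant $\varphi(\bx)$. So the plan is to reduce Lemma~\ref{lem:limit} to the uniform-weight case of \cite{Werner:1994}, using continuity of $\varphi$ at $\bx$ and a localization argument. Fix a boundary point $\bx$ where $\partial K$ is twice differentiable in the generalized sense (Alexandrov), so that $r_K(\bx)>0$ whenever $H_{n-1}(K,\bx)>0$. Such points form a set of full measure. For these points the classical computation of \cite{Werner:1994} gives
\[
    \lim_{\delta\to0^+}\frac{(\tilde\bx^{\delta}-\bx)\cdot\bn_K(\bx)}{\delta^{\frac{2}{n+1}}}
    = c_n H_{n-1}(K,\bx)^{\frac{1}{n+1}},
\]
where $\tilde\bx^\delta$ is the radial boundary point of the unweighted body $\cI_\delta(K)$. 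I will assume this, including the case $H_{n-1}(K,\bx)=0$, where the limit is $+\infty$ or $0$ in the appropriate normalization. In that degenerate case the statement reads $0$ (with $H^{1/(n+1)}=0$), and the same sandwich argument below handles it.

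First I localize. As $\delta\to0$, $\bx^\delta\to\bx$, and the cap $[\bx^\delta,K]\setminus K$ shrinks into an arbitrarily small neighbourhood $B_\varepsilon(\bx)$. This uses the fact that the frontside boundary $\partial K^+(\bx^\delta)$ shrinks to the face of $K$ in direction $\bn_K(\bx)$. By the choice of $\bx$ (unique normal, and the face is $\{\bx\}$ when the curvature is positive), this face is just $\bx$, so the frontside shrinks to $\bx$. Hence for every $\eta>0$ there is $\delta(\eta)$ such that on the relevant caps
\[
    (1-\eta)\varphi(\bx)\le\varphi\le(1+\eta)\varphi(\bx),
\]
and therefore, for $\bz$ on the ray through $\bx$ near $\bx$,
\[
    (1-\eta)\varphi(\bx)V_K(\bz)\le V_K^\varphi(\bz)\le(1+\eta)\varphi(\bx)V_K(\bz).
\]

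Next comes the sandwich. Along the ray, $V_K$ and $V_K^\varphi$ are continuous and increasing: the cap set is monotone in $\bz$ moving outward along a ray from an interior point, since $[\bz,K]\subset[\bz',K]$ when $\bz\in[\bo,\bz']$ and $\bo\in K$. The boundary point $\bx^\delta$ is where $V_K^\varphi=\delta$, so the comparison above gives
\[
    \|\tilde\bx^{\delta/((1+\eta)\varphi(\bx))}\|\le\|\bx^\delta\|\le\|\tilde\bx^{\delta/((1-\eta)\varphi(\bx))}\|.
\]
Dividing by $\delta^{2/(n+1)}$ and applying the unweighted limit, the $\liminf$ and $\limsup$ of the quantity in Lemma~\ref{lem:limit} lie between
\[
    c_nH_{n-1}(K,\bx)^{\frac1{n+1}}\bigl((1\pm\eta)\varphi(\bx)\bigr)^{-\frac{2}{n+1}}.
\]
Letting $\eta\to0$ finishes the argument.

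The main obstacle is the localization step: proving that the cap $[\bx^\delta,K]\setminus K$ lies in a small ball around $\bx$. When $\partial K$ contains flat pieces through $\bx$ this fails. However, flat pieces through $\bx$ force $H_{n-1}(K,\bx)=0$, and at almost every such point it suffices to show the limit is $0$. For that I only need the one-sided global bound $V_K^\varphi\ge m_\varphi V_K$ from \eqref{eqn:compare_uniform}, which gives $\|\bx^\delta\|\le\|\tilde\bx^{\delta/m_\varphi}\|$, together with the unweighted limit $0$. So I split into the two cases $H_{n-1}(K,\bx)>0$ (localization) and $H_{n-1}(K,\bx)=0$ (global upper comparison).
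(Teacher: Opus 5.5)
Your proposal is correct and follows essentially the same route as the paper. For normal points with $H_{n-1}(K,\bx)>0$, you localize the cap $[\bx^\delta,K]\setminus K$ near $\bx$, sandwich $V_K^\varphi(\bx^\delta)$ between $(1\pm\varepsilon)\varphi(\bx)V_K(\bx^\delta)$, and invoke Werner's unweighted limit; for $H_{n-1}(K,\bx)=0$ you use the global bound $V_K^\varphi\ge m_\varphi V_K$. Your justification of the localization (the frontside boundary collapsing onto the face $F(K,\bn_K(\bx))=\{\bx\}$) and of monotonicity along rays fills in details the paper leaves implicit.
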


\begin{proof}
Let $\varepsilon \in (0,1)$ and let $\bx \in \partial K$ be arbitrary.
Since $\varphi$ is continuous, there exists a neighborhood $B_{\eta}(\bx)$ of $\bx$ such that for all $ \by \in B_{\eta}(\bx)$  we have
\[
    (1-\varepsilon)\, \varphi(\bx) \leq \varphi(\by) \leq (1+\varepsilon) \, \varphi(\bx).
\]
We first assume that $\bx\in\partial K$ is a normal boundary point with $H_{n-1}(K,\bx)>0$. Then we may choose $\delta_0$ so small that for all
$\delta < \delta_0$ we have that
$$[\bx^\delta,K]\setminus K \subset B_{\eta}(\bx).$$
Recalling that $V_K(\bz) = \vol_n([\bz,K]\setminus K)$, we find
\begin{equation*}
    (1-\varepsilon) \, \varphi(\bx) \, V_K(\bx^\delta)\leq \delta = V_K^{\varphi}(\bx^\delta) \leq (1+\varepsilon)\,  \varphi(\bx) \, V_K(\bx^\delta),
\end{equation*}
or, equivalently
\begin{equation*}
    \underline{\delta} := \frac{\delta}{(1+\varepsilon)\varphi(\bx)}
    \leq  V_K(\bx^\delta)
    \leq \frac{\delta}{(1-\varepsilon)\varphi(\bx)} =
    :\overline{\delta}.
\end{equation*}
We set
\begin{equation*}
    \underline{\bx}^{\underline{\delta}} = \{r\bx : r\geq 0\} \cap \cI_{\underline{\delta}}(K), \quad
    \overline{\bx}^{\overline{\delta}} = \{r\bx : r\geq 0\} \cap \cI_{\overline{\delta}}(K)
\end{equation*}
and note that $\bo,\bx,\underline{\bx}^{\underline{\delta}}, \bx^\delta$, and $\overline{\bx}^{\overline{\delta}}$ are all colinear and in this order on the half-line $\{r\bx:r\geq 0\}$.

Thus
\begin{align*}
    &\limsup_{\delta\to 0^+} \frac{(\bx^\delta-\bx)\cdot\bn_K(\bx)}{\delta^{\frac{2}{n+1}}}\\
    &\quad = \limsup_{\delta\to 0^+} \frac{\bx\cdot\bn_K(\bx)}{\delta^{\frac{2}{n+1}}} \left(\frac{\|\bx^\delta\|}{\|\bx\|}-1\right)\\
    &\quad \leq \limsup_{\delta\to 0^+} \frac{\bx\cdot\bn_K(\bx)}{\delta^{\frac{2}{n+1}}} \left(\frac{\|\overline{\bx}^{\overline{\delta}}\|}{\|\bx\|}-1\right)\\
    &\quad = ((1-\varepsilon)\varphi(\bx))^{-\frac{2}{n+1}} \limsup_{\overline{\delta}\to 0^+} \frac{\bx\cdot\bn_K(\bx)}{\overline{\delta}^{\frac{2}{n+1}}} \left(\frac{\|\overline{\bx}^{\overline{\delta}}\|}{\|\bx\|}-1\right)\\
    &\quad = c_n H_{n-1}(K,\bx)^{\frac{1}{n+1}} ((1-\varepsilon)\varphi(\bx))^{-\frac{2}{n+1}},
\end{align*}
where in the last step we used the limit theorem for uniform weights \cite[Lem.\ 3]{Werner:1994}.
Similarly we find
\begin{align*}
    \liminf_{\delta\to 0^+} \frac{(\bx^\delta-\bx)\cdot\bn_K(\bx)}{\delta^{\frac{2}{n+1}}}
    &\geq ((1+\varepsilon)\varphi(\bx))^{-\frac{2}{n+1}} \liminf_{\underline{\delta}\to 0^+} \frac{\bx\cdot\bn_K(\bx)}{\underline{\delta}^{\frac{2}{n+1}}} \left(\frac{\|\underline{\bx}^{\underline{\delta}}\|}{\|\bx\|}-1\right)\\
    &= c_n H_{n-1}(K,\bx)^{\frac{1}{n+1}} ((1+\varepsilon)\varphi(\bx))^{-\frac{2}{n+1}}.
\end{align*}

Finally, assume that $\bx\in\partial K$ is a normal boundary point such that $H_{n-1}(K,\bx)=0$. There is $\delta_1>0$ such that $L:=\cI_{\delta_1}^\varphi(K)$ is a compact star-body. Set
\begin{equation*}
    m_\varphi := \min_{\bx\in L} \varphi(\bx),
\end{equation*}
and note that $m_\varphi>0$ since $\varphi$ is continuous. Then
\begin{equation*}
    \delta =  \int_{[\bx^\delta,K]\setminus K} \varphi(\by) \,\dd\by \geq m_\varphi \vol_n([\bx^\delta,K]\setminus K),
\end{equation*}
for all $\delta\leq \delta_1$. Thus
\begin{equation*}
    V_K(\bx^\delta)= \vol_n([\bx^\delta,K]\setminus K) \leq \frac{\delta}{m_\varphi} =: \tilde{\delta},
\end{equation*}
and for $\tilde{\bx}^{\tilde{\delta}} = \{r\bx:r\geq 0\} \cap \cI_{\tilde{\delta}}(K)$ we have that $\bo,\bx,\bx^\delta$, and $\tilde{\bx}^{\tilde{\delta}}$ are colinear.
Thus
\begin{align*}
    0&\leq \limsup_{\delta\to 0^+} \frac{(\bx^\delta-\bx)\cdot\bn_K(\bx)}{\delta^{\frac{2}{n+1}}} \\
    &\leq m_\varphi^{-\frac{2}{n+1}} \limsup_{\tilde{\delta}\to 0^+} \frac{\bx\cdot\bn_K(\bx)}{\tilde{\delta}^{\frac{2}{n+1}}} \left(\frac{\|\tilde{\bx}^{\tilde{\delta}}\|}{\|\bx\|}-1\right)\\
    &=c_n H_{n-1}(K,\bx)^{\frac{1}{n+1}} m_{\varphi}^{-\frac{2}{n+1}} = 0.
\end{align*}
Since $\varepsilon >0$ was arbitrary and almost all $\bx\in\partial K$ are normal, this completes the proof.
\end{proof}

We are now ready to complete the proof of our main theorem.

\begin{proof}[Proof of Theorem~\ref{thm:main_weighted}]
    From Lemma~\ref{lem:voldiff} and Lemma~\ref{lem:bounded} we conclude \eqref{eqn:step2} and apply Lemma~\ref{lem:limit}, that is,
    \begin{align*}
    \lim_{\delta \to 0^+} \frac{\vol^\psi_n\left(\cI_\delta^\varphi(K)\right) - \vol^\psi_n(K)}{\delta^\frac{2}{n+1}}
    &=  \int_{\partial K} \left(\lim_{\delta \to 0^+}  \frac{(\bx^\delta-\bx)\cdot\bn_K(\bx)}{\delta^{\frac{2}{n+1}}}\right) \psi(\bx)\, \dd\bx\\
    &= c_n \int_{\partial K} H_{n-1}(K,\bx)^{\frac{1}{n+1}} \varphi(\bx)^{-\frac{2}{n+1}} \psi(\bx)\, \dd\bx.
    \end{align*}
    This completes the proof.
\end{proof}

\section{Proofs of additional results}\label{sec:applications}

We now prove our results for projective geometries and dual volumes.

\subsection{Riemannian spaces of constant curvature}\label{sec:Riemannian}

\begin{proof}[Proof of Theorem~\ref{thm:main_riemannian}]
    For $K\in \cK_0(\Sp^n(\lambda))$ we choose a projective model $(\mathbb{B}^n_\lambda,g^\lambda)$ and identify $K$ with a Euclidean convex body $\overline{K}\subset \mathbb{B}^n_\lambda$. Furthermore, we may assume w.l.o.g.\ that $\bo\in\interior K$. By \eqref{eqn:space_form_illumination_is_weighted} we find
    \begin{equation*}
        \vol_n^\lambda(\cI^\lambda_\delta(K)) = \vol_n^{\varphi_\lambda}(\cI^{\varphi_{\lambda}}_\delta(\overline{K})).
    \end{equation*}
    By Theorem~\ref{thm:main_weighted} we conclude
    \begin{equation*}
        \lim_{\delta\to 0^+} \frac{\vol_n^\lambda(\cI_\delta^\lambda(K))-\vol_n^\lambda(K)}{\delta^{\frac{2}{n+1}}}
        = c_n\int_{\partial \overline{K}} H_{n-1}^e(\overline{K},\bx)^{\frac{1}{n+1}} \varphi_\lambda(\bx)^{\frac{n-1}{n+1}} \,\dd\bx.
    \end{equation*}
    Now notice that by \eqref{eqn:lgauss} we have
    \begin{equation*}
        H_{n-1}^e(\overline{K},\bx)^{\frac{1}{n+1}} \varphi_\lambda(\bx)^{\frac{n-1}{n+1}}
        = H_{n-1}^\lambda(\overline{K},\bx)^{\frac{1}{n+1}} \sqrt{\frac{1+\lambda(\bn_{\overline{K}}^e(\bx)\cdot \bx)^2}{(1+\lambda\|\bx\|^2)^n}},
    \end{equation*}
    and this together with \eqref{eqn:lboundary} yields
    \begin{align*}
        \int_{\partial \overline{K}} H_{n-1}^e(\overline{K},\bx)^{\frac{1}{n+1}} \varphi_\lambda(\bx)^{\frac{n-1}{n+1}} \,\dd\bx
        &=\!\! c_n\int_{\partial \overline{K}} H_{n-1}^\lambda(\overline{K},\bx)^{\frac{1}{n+1}} \, \vol_{\partial \overline{K}}^{\lambda}(\dd\bx)
        \\
        & = \!\! c_n\,\Omega^\lambda(K).
    \end{align*}
    This completes the proof.
\end{proof}

Next we aim to prove Theorem~\ref{thm:convex_hyperbolic_plane}, i.e., that the illumination body in the hyperbolic plane of a convex body is always convex. For this we first show the following lemmas.

\begin{lemma}\label{lem:hyperbolic_plane_triangle}
    Let $\bp,\bq\in\HH^2$, and let $\gamma:\RR\to \HH^2$ be a constant speed geodesic. Then the function $A:\RR\to [0,+\infty)$
    \begin{equation*}
        A(t):= \vol_2^h(\conv\{\gamma(t),\bp,\bq\}),
    \end{equation*}
    has no local maxima unless $A\equiv 0$.
\end{lemma}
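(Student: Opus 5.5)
The plan is to work in the hyperboloid model and use the hyperbolic analogue of the half-cross-product area formula. It turns each level set of the area into the intersection of the hyperboloid with an affine plane, and along a geodesic everything becomes a combination of $\cosh t$ and $\sinh t$.

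Realize $\HH^2$ as $\{\bx\in\RR^3:\langle \bx,\bx\rangle_L=-1,\ x_3>0\}$ with Lorentzian form $\langle\cdot,\cdot\rangle_L$, and set $B(\bu,\bv):=-\langle \bu,\bv\rangle_L$, so that $B(\bu,\bv)=\cosh d(\bu,\bv)\geq 1$ for points of $\HH^2$. The first step is to invoke (or rederive from the hyperbolic sine/cosine laws and Gauss--Bonnet, $A=\pi-$angle sum) the classical formula
\[
    \tan\frac{A}{2}=\frac{|\det(\bp,\bq,\bx)|}{1+B(\bp,\bq)+B(\bq,\bx)+B(\bx,\bp)}
\]
for the area $A$ of the triangle $\conv\{\bx,\bp,\bq\}$. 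Since $A\in[0,\pi)$, the function $A$ is a strictly increasing function of $\tan(A/2)$. It therefore suffices to show that $T(t):=\tan(A(t)/2)$ has no local maximum with positive value. Note that $A(t_0)=0$ can never be a local maximum unless $A\equiv 0$ nearby, which happens only if $\gamma$ is the geodesic through $\bp,\bq$, and then $A\equiv 0$ globally. For a genuine local maximum we may therefore assume $A(t_0)>0$. In a neighbourhood of $t_0$, $\det(\bp,\bq,\gamma(t))$ then has a fixed sign, say positive.

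Next, parametrize the geodesic as $\gamma(t)=\cosh(t)\,\bu+\sinh(t)\,\bv$ with $\bu\in\HH^2$ and $\bv$ a unit spacelike vector Lorentz-orthogonal to $\bu$; a constant-speed reparametrization does not affect local maxima. Both $N(t):=\det(\bp,\bq,\gamma(t))$ and $B(\bp+\bq,\gamma(t))$ are linear in $\gamma(t)$, hence of the form $a\cosh t+b\sinh t$. Thus $T(t)=N(t)/D(t)$ with $D(t)=\alpha\cosh t+\beta\sinh t+k$, where $k:=1+B(\bp,\bq)\geq 2>0$ and $D>0$.

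Suppose $T$ has a local maximum at $t_0$ with value $c=T(t_0)>0$. Define
\[
    h(t):=N(t)-cD(t)=a'\cosh t+b'\sinh t-ck .
\]
Then $h(t_0)=0$ and, since $D>0$, we have $h(t)\leq 0$ near $t_0$. Hence $h$ has a local maximum at $t_0$, so $h''(t_0)\leq 0$. But $h''(t)=a'\cosh t+b'\sinh t=h(t)+ck$, so $h''(t_0)=ck>0$, which is a contradiction. Geometrically, this says that the level curve $\{A=\text{const}\}$ is a plane section of the hyperboloid (a cycle), and a geodesic cannot touch it from the sublevel side.

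The main obstacle is the first step: stating the area formula with the correct signs and normalization in the Lorentzian setting, and checking that its denominator is positive so that $T$ is a well-defined monotone proxy for $A$. Everything after that is the elementary $\cosh/\sinh$ argument above. If citing the formula is not acceptable, I would derive it from the Euclidean-style identity $\tan(A/2)=\sin(\cdot)/(1+\cos(\cdot)+\dots)$ obtained by combining Gauss--Bonnet with the hyperbolic law of cosines in terms of $B$.
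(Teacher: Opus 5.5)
Your argument is correct, and it takes a genuinely different route from the paper.

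The paper works in the upper half-plane. It writes the area via Stokes' theorem as $|\int_{\partial\triangle}\dd x/y|$, applies a half-angle formula to each geodesic side, and uses the triple tangent addition formula to obtain an explicit rational function $\tau(s)=\alpha(s)/\beta(s)$ of $s=e^t$ with $A=2\arctan|\tau|$. It then computes $\tau'$ and $\tau''$ at a critical point to show $\operatorname{sgn}\tau''(s_0)=\operatorname{sgn}\tau(s_0)$.

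You instead import the Lorentzian version of the van Oosterom--Strackee formula, the hyperbolic twin of the formula the paper itself uses in Lemma~\ref{lem:spherical_triangle}. You then exploit only its structure: the numerator and the non-constant part of the denominator are linear in $\gamma(t)$. Hence $h=N-cD$ satisfies $h''=h+ck$, which excludes a zero touched from below with no computation at all.

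The two quantities are in fact identical. With the paper's normalisation one checks $\alpha(s)=2y_1sN(t)$ and $\beta(s)=2y_1sD(t)$. For instance, the $s$-coefficient $(x_0-x_1)^2+(1+y_1)^2$ of $\beta$ equals $2y_1(1+\cosh d(\bp,\bq))=2y_1k$. So the paper's Stokes computation can be read as a derivation of your formula in half-plane coordinates.

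The trade-off between the two routes:
\begin{itemize}
\item The paper's route is self-contained, modulo its stated exercise.
\item Yours rests on a cited identity, and that identity is correct. The squared numerator is the Lorentzian Gram determinant $1+2\cosh a\cosh b\cosh c-\cosh^2a-\cosh^2b-\cosh^2c$, since the Lorentz form has determinant $-1$. For a right triangle it reduces to the classical $\tan(A/2)=\tanh(a/2)\tanh(b/2)$.
\item In exchange, you need no normalisation by isometries and no second-derivative bookkeeping.
\item You also get a geometric picture: level sets of $A$ are plane sections of the hyperboloid, i.e.\ cycles.
\item The contrast with the sphere becomes transparent. There $\gamma(t)=\cos t\,\bu+\sin t\,\bv$ gives $h''=-(h+ck)$, so $h''(t_0)=-ck<0$ and strict local maxima are possible, exactly as exhibited in Lemma~\ref{lem:spherical_triangle}.
\end{itemize}
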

\begin{proof}
    We work in the upper half plane model of $\HH^2$. We may, after applying an orientation-preserving isometry and a monotone reparametrization of $\gamma$, assume that
    \begin{align*}
        \gamma(t) &= (0,e^t),&
        \bp &= (x_1,y_1), & \text{and}&&
        \bq &= (x_0,1),
    \end{align*}
    with $y_1>0$.
    Set $\triangle(t) := \mathrm{conv}\{\gamma(t),\bp,\bq\}$. For the $1$-form $\omega := \frac{d x}{y}$ we have
    \[
        d\omega = \frac{dx\wedge dy}{y^2}.
    \]
    Thus, by Stokes' theorem,
    \begin{align*}
        A(t) = \vol_2^h(\triangle(t))
        = \int_{\triangle(t)} \frac{\dd(x,y)}{y^2}
        = \left|\int_{\partial \triangle(t)} \frac{\dd x}{y}\right|,
    \end{align*}
    It is an exercise to show that, if $\eta$ is a geodesic segment joining $(u_0,v_0)$  to $(u_1,v_1)$ on the upper half-plane, then
    \[
        \tan\left(\frac{1}{2} \int_{\eta} \frac{\dd x}{y}\right) = \frac{u_1-u_0}{v_0+v_1}.
    \]
    Applying this to the three sides of the triangle $\triangle(t)$, defining $\tau:(0,\infty)\to \RR$ by
    \[
        \tau(s) = \tan\left(\frac{1}{2} \int_{\partial \triangle(\ln s)} \frac{\dd x}{y}\right), s>0,
    \]
    and using the identity
    \[
        \tan(a+b+c) = \frac{\tan a + \tan b + \tan c - \tan a \tan b \tan c}{1-\tan a\tan b-\tan a\tan c-\tan b\tan c}
    \]
    we find that $\tau(s) = \frac{\alpha(s)}{\beta(s)}$ where
    \begin{align*}
        \alpha(s) &= x_1(x_0^2+1)-x_0(x_1^2+y_1^2)+(x_0-x_1)s^2,\\
        \beta(s) &= y_1(x_0^2+1)+x_1^2+y_1^2+((x_0-x_1)^2+(1+y_1)^2)s + (1+y_1)s^2.
    \end{align*}
    Differentiating, we obtain
    \[
        \tau'(s) = \frac{((x_0-x_1)^2+(1+y_1)^2)\mu(s)}{\beta(s)^2},
    \]
    where
    \[
        \mu(s) = (x_0-x_1)s^2+2(x_0y_1-x_1)s-x_1(x_0^2+1)+x_0(x_1^2+y_1^2).
    \]
    If $\tau(s)=0$, then $A(\ln s)=0$, so $A$ does not attain a local maximum at $t=\ln s$ unless it vanishes in a neighborhood of $t$, in which case $\gamma$ passes through $\bp$ and $\bq$ and we find $A\equiv 0$. Also, if $\tau(s)\neq 0$, then $|\tau|$ is differentiable at $s$ and so does not attain a maximum unless $\tau'(s)=0$.

    So assume that $s_0\in (0,+\infty)$ satisfies $\tau(s_0)\neq 0$ and $\tau'(s_0)=0$. Then $\mu(s_0)=0$, that is,
    \begin{equation*}
        x_1(x_0^2+1)-x_0(x_1^2+y_1^2) = (x_0-x_1)s_0^2 + 2(x_0y_1-x_1)s_0.
    \end{equation*}
    This yields
    \begin{align*}
        \alpha(s_0) &= 2(x_0-x_1) s_0^2+2(x_0y_1-x_1)s_0\\
        \intertext{and}
        \mu'(s_0)&=2(x_0-x_1)s_0+2(x_0y_1-x_1).
    \end{align*}
    Differentiating again we find that
    \begin{equation*}
        \tau''(s_0) = \frac{((x_0-x_1)^2+(1+y_1)^2)\alpha(s_0)}{s_0\beta(s_0)^2} = \frac{(x_0-x_1)^2+(1+y_1)^2}{s_0\beta(s_0)}\tau(s_0).
    \end{equation*}
    In particular
    \[
        \operatorname{sgn} \tau''(s_0) = \operatorname{sgn} \tau(s_0),
    \]
    whence the function $|\tau|$ does not have a local maximum at $s_0$. Since $A(t)=2\arctan(|\tau(e^t)|)$, it follows that the function $A$ does not have a local maximum at any $t=\ln s\in\RR$ unless $A\equiv 0$.
\end{proof}

\begin{figure}[t]
\centering
\begin{tikzpicture}[scale=0.6]

\fill[black,opacity=0.05]
    (2,0) rectangle (10,12);

\fill[blue, opacity =0.3]
  (0,1)
  .. controls (1,0)  and (1.5,0.5)  .. (2,1)
  .. controls (2.5,1.5) and (2.5,1.5) .. (3,2)
  .. controls (4,3)  and (4,6)  .. (2,8)
  .. controls (0,10)  and (-2,10)  .. (-3,7)
  .. controls (-4,4) and (-2,3) .. (0,1);

\fill [orange, opacity=0.3]
    (2,1) -- (5.5,4.5) -- (2,8)--cycle;

\draw[thick, blue]
  (0,1)
  .. controls (1,0)  and (1.5,0.5)  .. (2,1)
  .. controls (2.5,1.5) and (2.5,1.5) .. (3,2)
  .. controls (4,3)  and (4,6)  .. (2,8)
  .. controls (0,10)  and (-2,10)  .. (-3,7)
  .. controls (-4,4) and (-2,3) .. (0,1);

\draw (2,1) -- (5.5,4.5) -- (2,8);
\draw[thick] (4.25,0.75) -- (5.5,4.5) -- (7.5,10.5);
\draw[dashed] (2,1) -- (7,9) -- (2,8);
\draw (2,0) -- (2,12);

\draw[dotted] (-0.4,9.4) -- (7,9) -- (3.35,2.5);

\fill (2,8) circle (3pt) node[below left] {$\bp$};
\fill (2,1) circle (3pt) node[below right] {$\bq$};

\fill (5.5,4.5) circle (3pt) node[right] {$\gamma(t_0)$};
\fill (7,9) circle (3pt) node[below right] {$\gamma(t_i)$};

\node at (8,1) {$H^+$};
\node at (-2,5) {$K$};

\node[right] at (4.5,1.5) {$\gamma$};

\end{tikzpicture}
\caption{Sketch for the proof of Proposition~\ref{prop:areanomax}.
Given a geodesic segment $\gamma$ that is outside of $K$ we consider the point $\gamma(t_0)$ that determines the boundary points $\bp$ and $\bq$. $H^+$ is the closed half-space that is determined by the points $\bp,\bq\in\partial K$ and for $|t-t_0|$ small enough $\gamma(t)$ is contained in $H^+$.
Then the convex hull $[\gamma(0),K]$ is the union of $K$ (in blue) with the triangle $\triangle(t_0)=\conv\{\bp,\bq,\gamma(t_0)\}$ (in orange). The triangle $\triangle(t_1)=\conv\{\bp,\bq,\gamma(t_i)\}$ (dashed lines) is contained in the convex hull $[\gamma(t_i),K]$ (dotted lines).}
\label{fig:KHpq}
\end{figure}

\begin{proposition}\label{prop:areanomax}
    Let $K\subset \HH^2$ be a convex body and let $\gamma:[0,1]\to \HH^2$ be a constant speed geodesic segment lying outside of $K$. Then the function
    $B:(0,1)\to [0,+\infty)$ defined by
    $B(t):=\vol_2^h([\gamma(t),K])$ has no local maximum.
\end{proposition}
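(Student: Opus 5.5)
We argue by contradiction, following the picture in Figure~\ref{fig:KHpq}. Suppose $B$ has a local maximum at some $t_0\in(0,1)$. Since $\gamma(t_0)\notin K$, the boundary of the convex hull $[\gamma(t_0),K]$ consists of an arc of $\partial K$ together with two geodesic segments from $\gamma(t_0)$ to support points $\bp,\bq\in\partial K$. We pick $\bp,\bq$ to be the extreme tangency points on each side. Then
\[
    [\gamma(t_0),K] = K\cup \triangle(t_0), \qquad \triangle(t):=\conv\{\gamma(t),\bp,\bq\},
\]
and $K\cap\triangle(t_0)$ is the region of $K$ cut off by the chord $[\bp,\bq]$ on the side of $\gamma(t_0)$. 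Let $H^+$ be the closed half-plane bounded by the geodesic through $\bp,\bq$ that contains $\gamma(t_0)$. By construction $\gamma(t_0)$ lies in the interior of $H^+$, because $\bp\ne\bq$ and $\gamma(t_0)$ is not on that geodesic. By continuity, $\gamma(t)\in H^+$ for $|t-t_0|$ small.

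\textbf{Key comparison.} For every $t$ near $t_0$ we have $\triangle(t)\subset[\gamma(t),K]$, since $\bp,\bq\in K$. Hence
\[
    [\gamma(t),K]\supseteq K\cup\triangle(t),
\]
and therefore
\[
    B(t)\geq \vol_2^h(K)+\vol_2^h(\triangle(t))-\vol_2^h(K\cap\triangle(t)).
\]
We want to control the subtracted term. For $\gamma(t)\in H^+$, the set $K\cap\triangle(t)$ is contained in $K\cap H^+$. The set $K\cap\triangle(t_0)$ equals $K\cap H^+$, because $\gamma(t_0)$ sees exactly the arc between $\bp$ and $\bq$. This gives
\[
    B(t)\geq \vol_2^h(K)+A(t)-\vol_2^h(K\cap H^+),
\]
where $A(t)=\vol_2^h(\triangle(t))$ is the function of Lemma~\ref{lem:hyperbolic_plane_triangle}, with equality at $t=t_0$. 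Consequently, if $B$ has a local maximum at $t_0$, then $A$ has a local maximum at $t_0$ as well.

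\textbf{Main obstacle.} The step I expect to be hardest is justifying $K\cap\triangle(t)\subseteq K\cap H^+$ with equality at $t_0$. This needs $K\cap H^+\subseteq\triangle(t_0)$, which holds because the two sides $[\gamma(t_0),\bp]$ and $[\gamma(t_0),\bq]$ are supporting geodesics of $K$. The degenerate situations also need care: $\bp$ and $\bq$ are extreme points chosen on possibly flat boundary pieces, and $\gamma$ might be tangent to a supporting line. These are handled because only the inclusion $\triangle(t)\subseteq[\gamma(t),K]$ and $\triangle(t)\cap K\subseteq H^+\cap K$ are used.

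\textbf{Conclusion.} By Lemma~\ref{lem:hyperbolic_plane_triangle}, $A$ has no local maximum unless $A\equiv0$. The case $A\equiv 0$ would mean $\gamma$ passes through $\bp$ and $\bq$, i.e.\ it meets $K$, which contradicts the assumption that $\gamma$ lies outside $K$; moreover $A(t_0)>0$ directly. Hence $A$ has no local maximum at $t_0$, so $B$ has none either, and this contradiction proves the proposition.
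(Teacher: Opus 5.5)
Your proof is correct and follows essentially the same argument as the paper. You bound $B(t)$ from below by $\vol_2^h(K)+A(t)-\vol_2^h(K\cap H^+)$, with equality at $t_0$, and use this to transfer the absence of local maxima from $A$ (Lemma~\ref{lem:hyperbolic_plane_triangle}) to $B$. The paper phrases this via a sequence $t_i\to t_0$ with $A(t_i)>A(t_0)$ rather than by contradiction, and your explicit check that $A(t_0)>0$ (ruling out $A\equiv 0$) is a detail the paper leaves implicit.
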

\begin{proof}
    Let $t_0\in (0,1)$ be arbitrary. Let $\bp,\bq\in \partial K$ be the contact points of supporting geodesics joining $\gamma(t_0)$ to $K$; see Figure~\ref{fig:KHpq}. Set $\triangle(t) = \conv\{\gamma(t),\bp,\bq\}$ for $t\in(0,1)$ and note that
    \begin{equation*}
        [\gamma(t_0),K] = K\cup \triangle(t_0).
    \end{equation*}
    Thus
    \begin{equation}\label{eqn:proof_step_1}
        B(t_0) = \vol_2^h([\gamma(t_0),K]) = \vol_2^h(K) + \vol_2(\triangle(t_0)) - \vol_2^h(K\cap \triangle(t_0)).
    \end{equation}
    Moreover,
    \begin{equation}\label{eqn:proof_step_2}
        K\cap \triangle(t_0) = K\cap H^+,
    \end{equation}
    where $H^+$ is the closed half plane containing $\bp$ and $\bq$ on its boundary and $\gamma(t_0)$ in its interior. By Lemma~\ref{lem:hyperbolic_plane_triangle}, the function $A(t)=\vol_2^h(\triangle(t))$ does not attain a local maximum at $t_0$. Hence there exists a sequence $(t_i)_{i\geq 1}$ such that $\lim_{i\to\infty} t_i = t_0$ and
    \begin{equation}\label{eqn:proof_step_3}
        A(t_i)=\vol_2^h(\triangle(t_i)) > \vol_2^h(\triangle(t_0))=A(t_0) \qquad \text{for all $i\geq 1$}.
    \end{equation}
    Since $\gamma(t_0)$ lies in the interior of $H^+$, we may assume that $\gamma(t_i)\in H^+$ for all $i\geq 1$. Hence, by \eqref{eqn:proof_step_2},
    \begin{equation}\label{eqn:proof_step_4}
        K\cap\triangle(t_i) \subset K\cap H^+ = K\cap\triangle(t_0)\qquad \text{for all $i\geq 1$.}
    \end{equation}
    For every $i\geq 1$ the set $[\gamma(t_i),K]$ contains the set $K$ and the triangle $\triangle(t_i)=\conv\{\gamma(t_i),\bp,\bq\}$. Hence, by \eqref{eqn:proof_step_3}, \eqref{eqn:proof_step_4}, and \eqref{eqn:proof_step_1},
    \begin{align*}
        B(t_i) = \vol_2^h([\gamma(t_i),K])
            &\geq \vol_2^h(K\cup \triangle(t_i))\\
            &= \vol_2^h(K) + A(t_i) - \vol_2(K\cap \triangle(t_i))\\
            &> \vol_2^h(K) + A(t_0) - \vol_2(K\cap \triangle(t_0)) = B(t_0).
    \end{align*}
    It follows that $B$ has no local maximum at $t_0$. This completes the proof since $t_0\in(0,1)$ was arbitrary.
\end{proof}

\begin{proof}[Proof of Theorem~\ref{thm:convex_hyperbolic_plane}]
    Let $K\subset \HH^2$ be a convex body. The illumination body $\cI_\delta^h(K)$ is a sublevel set of the function
    \begin{equation*}
        \bp\mapsto \vol_2^h([\bp,K])
    \end{equation*}
    on the hyperbolic plane. A continuous function $f$ on the hyperbolic plane has convex sublevel sets if and only if, for every geodesic $\gamma$, the function $f\circ \gamma$ has no interior local maxima. Hence the theorem follows from Proposition~\ref{prop:areanomax}.
\end{proof}

Finally we turn our attention to the spherical plane.

\begin{lemma}\label{lem:spherical_triangle}
Let $\bp,\bq\in\SS^2$ be distinct non-antipodal points. Then for every $\delta\in(0,\pi)$ there exists a geodesic segment
\[
\gamma:(-\varepsilon,\varepsilon)\to \SS^2
\]
such that
\[
A(t):=\vol_2^s(\mathrm{conv}\{\gamma(t),\bp,\bq\})
\]
has a strict local maximum at $t=0$ and satisfies $A(0)=\delta$.
\end{lemma}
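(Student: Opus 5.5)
The plan is to reduce the lemma to the classical fact (Lexell's theorem) that the level sets of the area of a spherical triangle with two fixed vertices are small circles. Then take $\gamma$ to be a great circle tangent to the level set $\{A=\delta\}$.

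\textbf{Step 1: an algebraic formula for the area.} I would use the Van Oosterom--Strackee formula: for unit vectors $\bp,\bq,\mathbf r$ spanning a proper spherical triangle,
\[
\tan\Bigl(\tfrac12\vol_2^s(\conv\{\bp,\bq,\mathbf r\})\Bigr)=\frac{|\det(\bp,\bq,\mathbf r)|}{1+\bp\cdot\bq+\bq\cdot\mathbf r+\mathbf r\cdot\bp}.
\]
This is the spherical counterpart of the $\tan$-identity used in the proof of Lemma~\ref{lem:hyperbolic_plane_triangle}. Since $\delta\in(0,\pi)$, the number $\theta:=\tan(\delta/2)$ is finite and positive. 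I would restrict to the open hemisphere $\{\det(\bp,\bq,\mathbf r)>0\}$ and to the region where the denominator is positive; these are the triangles of area less than $\pi$ lying on one side of the great circle through $\bp,\bq$. On this region,
\[
\{A\ge\delta\}=\{\mathbf r\in\SS^2:\ \mathbf m\cdot\mathbf r\ \ge\ \theta(1+\bp\cdot\bq)\},\qquad \mathbf m:=\bp\times\bq-\theta(\bp+\bq).
\]
The right-hand side is a spherical cap $C$. Because $\bp,\bq$ are distinct and non-antipodal, $1+\bp\cdot\bq>0$, so $\theta(1+\bp\cdot\bq)>0$ and $C$ is a \emph{small} cap, strictly contained in an open hemisphere. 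Its boundary circle is the Lexell circle, and it passes through $-\bp$ and $-\bq$. I would also check that $\mathbf m\neq 0$ and that $\theta(1+\bp\cdot\bq)<\|\mathbf m\|$, so that the circle is nondegenerate. This amounts to a short computation, $\|\mathbf m\|^2=\|\bp\times\bq\|^2+2\theta^2(1+\bp\cdot\bq)$, compared with $\theta^2(1+\bp\cdot\bq)^2$.

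\textbf{Step 2: choosing the point of tangency.} I would pick a point $\mathbf r_0\in\partial C$ different from $-\bp$ and $-\bq$ at which $\det(\bp,\bq,\mathbf r_0)>0$ and the denominator is positive; for instance, the point of $\partial C$ on the perpendicular bisector of $\bp,\bq$ on the appropriate side. On $\partial C$ we have $\det=\theta\cdot(\text{denominator})$, so the two positivity conditions are equivalent there. Moreover, the circle meets the region $\det>0$ in a nonempty open arc, since it passes through $-\bp,-\bq$ and is not the great circle through them. Near $\mathbf r_0$, $A$ is then smooth and satisfies $\{A>\delta\}=\interior C$ and $\{A=\delta\}=\partial C$ locally, with $A(\mathbf r_0)=\delta$.

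\textbf{Step 3: the tangent great circle.} Let $\gamma$ be the unit-speed great circle through $\gamma(0)=\mathbf r_0$ tangent to $\partial C$. A great circle tangent to the boundary of a small cap meets the closed cap only at the tangency point. This is because $\mathbf m\cdot\gamma(t)$ equals its boundary value at $t=0$ with zero derivative there, and it is a pure sinusoid whose maximum over the great circle is strictly below the cap threshold away from $t=0$; equivalently, the great circle lies in a plane through the origin, while $C$ lies strictly on one side of a parallel affine plane not through the origin. Hence for small $t\neq0$ the point $\gamma(t)$ lies outside $C$ but still in the region where the formula is valid, so $A(\gamma(t))<\delta=A(\gamma(0))$. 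This gives a strict local maximum at $t=0$ with $A(0)=\delta$.

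The main obstacle is Step 1's bookkeeping: verifying which side of the Lexell circle is the superlevel set, and making sure the chosen $\mathbf r_0$ lies in the branch of the formula where the absolute value and the positivity of the denominator are harmless. I would settle this by the sign analysis above. As a sanity check, letting $\theta\to0$ the cap degenerates to the hemisphere bounded by the great circle through $\bp,\bq$, which is consistent with small areas occurring near that great circle.
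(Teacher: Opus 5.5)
Your proof is correct, and it takes a genuinely different route from the paper's. The paper normalizes the geodesic rather than the level set. It takes $\gamma(t)=(\cos t,\sin t,0)$ and places $\bp,\bq$ symmetrically on a great circle tilted by an angle $\vartheta$. The Van Oosterom--Strackee formula then gives the closed form $\tan\frac{A(t)}{2}=\frac{\sin\varphi\sin\vartheta\cos t}{\cos\varphi+\cos\vartheta\cos t}$, where $2\varphi=d(\bp,\bq)$. This expression is increasing in $\cos t$, so the strict maximum at $t=0$ is immediate, and $A(0)=\delta$ is reached by continuity as $\vartheta$ runs through $(0,\pi-\varphi)$.

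You instead fix $\delta$ first and identify $\{A=\delta\}$ locally as the Lexell small circle $\{\mathbf m\cdot\mathbf r=\theta(1+\bp\cdot\bq)\}$. The strict maximum then reduces to $\mathbf m\cdot\gamma(t)=\theta(1+\bp\cdot\bq)\cos t$ for the tangent great circle. In the paper's coordinates the equator is exactly such a tangent great circle at the point of the perpendicular bisector, so the geometry is the same but hidden there.

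What each route buys:
\begin{itemize}
\item The paper's computation is shorter and needs no branch bookkeeping for the formula.
\item Your route avoids the intermediate value argument.
\item It shows that every point of the Lexell arc in $\{\det(\bp,\bq,\mathbf r)>0\}$ can serve as $\gamma(0)$.
\item It isolates where positive curvature enters: the positivity of the threshold $\theta(1+\bp\cdot\bq)$, which makes the superlevel set a small cap.
\end{itemize}

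Your bookkeeping checks out. Indeed $\|\mathbf m\|^2-\theta^2(1+\bp\cdot\bq)^2=(1-\bp\cdot\bq)(1+\bp\cdot\bq)(1+\theta^2)>0$. On $\partial C$ the conditions $\det>0$ and ``denominator $>0$'' coincide. The circle crosses the great circle through $\bp,\bq$ transversally at $-\bp,-\bq$, so one open arc lies in $\{\det>0\}$.

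One small correction: drop the parenthetical ``equivalently, \dots parallel affine plane'' in Step~3. The plane of $\gamma$ contains $\mathbf r_0$, and $\mathbf m\cdot\mathbf r_0\neq 0$, so that plane is not parallel to the plane of $\partial C$. The sinusoid computation is the actual argument and suffices on its own.
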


\begin{proof}
Let $\bp,\bq\in\SS^2$ be fixed and set
\[
\varphi := \tfrac12 d(\bp,\bq)\in(0,\tfrac{\pi}{2}).
\]
By an isometry of $\SS^2$, we may assume that
\[
\bp=(\cos\vartheta\cos\varphi, \sin\varphi, \sin\vartheta\cos\varphi),
\qquad
\bq=(\cos\vartheta\cos\varphi, -\sin\varphi, \sin\vartheta\cos\varphi),
\]
for any $\vartheta\in(0,\pi-\varphi)$.

Let
\[
\gamma(t)=(\cos t,\sin t,0),
\]
so that $\gamma$ is a great circle and $\bp,\bq$ lie on a rotated great circle obtained from $\gamma$ by rotation through the angle $\vartheta$ about the $y$-axis.

For a spherical triangle with vertices $A,B,C\subset\SS^2\subset \RR^3$, we use the solid angle formula for the spherical excess $E$
\[
\tan\frac{E}{2}
=
\frac{|\det(A,B,C)|}
{1+A\cdot B+B\cdot C+C\cdot A},
\]
see, for example, \cite{vOS:1983}.

Applied to the triangle $\conv\{\gamma(t),\bp,\bq\}\subset \SS^2$, a direct computation gives
\[
\det(\gamma(t),\bp,\bq)
=
2\sin\varphi\,\sin\vartheta\,\cos\varphi\,\cos t,
\]
and
\[
1+\gamma(t)\cdot\bp+\bp\cdot\bq+\bq\cdot\gamma(t)
=
2\cos\varphi\bigl(\cos\varphi+\cos\vartheta\cos t\bigr).
\]

Hence
\[
\tan\frac{A(t)}{2}
=
\frac{\sin\varphi\,\sin\vartheta\,\cos t}{\cos\varphi+\cos\vartheta\cos t}.
\]

The right-hand side is an even smooth function of $t$ since it depends on $\cos t$ that attains a strict local maximum at $t=0$. Since $x\mapsto \tan(x/2)$ is strictly increasing on $(0,\pi)$, it follows that $A(t)$ attains a strict local maximum at $t=0$.
Since
\[
A(0)
=
2\arctan\!\left(
\frac{\sin\varphi\,\sin\vartheta}{\cos\varphi+\cos\vartheta}
\right).
\]
we find that $(0,\pi-\varphi)\ni \vartheta\mapsto A(0)$ varies continuously from $0$ to $\pi$, and therefore attains every value in $(0,\pi)$.
Thus for every $\delta\in(0,\pi)$ there exists $\vartheta$ such that $A(0)=\delta$, and $\gamma$ yields the claim.
\end{proof}

\begin{theorem}\label{thm:spherical_not_convex}
    If the boundary of a convex body $K\subset \SS^2$ contains a geodesic segment, then $\cI_\delta^s(K)$ is not convex for all sufficiently small $\delta>0$.
\end{theorem}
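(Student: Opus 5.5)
The plan is to transplant the local-maximum mechanism of Lemma~\ref{lem:spherical_triangle} to the illumination functional. Convexity of a sublevel set $\{f\le\delta\}$ of a continuous function fails as soon as there is a geodesic segment $\gamma$ along which $f\circ\gamma$ has a strict local maximum at $t=0$ with value $\delta$. In that case $\gamma(\pm t)\in\cI^s_\delta(K)$ while $\gamma(0)\notin\cI^s_{\delta'}(K)$ for $\delta'$ slightly below $\delta$. So I will produce, for every small $\delta'$, two points of $\cI^s_{\delta'}(K)$ whose connecting geodesic leaves the set. I will work with $f(\bz)=\vol_2^s([\bz,K]\setminus K)$.

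First I fix the geometry. Let $\sigma\subset\partial K$ be a geodesic segment and let $\bp,\bq$ be two interior points of $\sigma$. I consider points $\bz$ on the far side of the great circle through $\sigma$, close to the relative interior of $[\bp,\bq]$. For such $\bz$ the two supporting geodesics from $\bz$ to $K$ touch $K$ exactly at the endpoints of the maximal segment of $\partial K$ on that great circle. More simply, once $\bz$ is close enough to the open segment, $[\bz,K]\setminus K$ is the open spherical triangle $\conv\{\bz,\bp',\bq'\}$ minus its base edge, where $\bp',\bq'$ are the endpoints of $\sigma$ (taking $\sigma$ maximal). The reason is that $\bz$ sees only the flat face, because $K$ lies on the other side of the great circle. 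Hence
\[
f(\bz)=\vol_2^s(\conv\{\bz,\bp',\bq'\})
\]
for all $\bz$ in a small half-neighbourhood $W$ of the open segment. Checking this cleanly is the main technical point: one must verify that the supporting geodesics from $\bz$ through $\bp'$ and $\bq'$ are indeed supporting for $K$. I would argue it by convexity. $K$ lies in the closed hemisphere bounded by the great circle of $\sigma$, and every point of $K$ near $\bp'$ outside $\sigma$ lies strictly inside that hemisphere. So for $\bz$ close to $\sigma$, the wedge at $\bz$ spanned by $\bp',\bq'$ contains $K$ locally, and globally by compactness.

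Next I invoke Lemma~\ref{lem:spherical_triangle} with the points $\bp',\bq'$. For each $\delta$ it gives a geodesic $\gamma$ through a point $\gamma(0)$ such that $A(t)=\vol^s_2(\conv\{\gamma(t),\bp',\bq'\})$ has a strict local maximum at $0$ with $A(0)=\delta$. The remaining step, which I expect to be the real obstacle, is ensuring that $\gamma(0)$ and $\gamma(t)$ for small $|t|$ lie in $W$, so that $f=A$ there.

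In the lemma's parametrization, $\gamma(0)=(1,0,0)$ lies at angular distance $\vartheta$ from the great circle through $\bp',\bq'$, measured off its midpoint. As $\delta\to0$ the corresponding $\vartheta\to0$, because $A(0)$ is continuous in $\vartheta$ and vanishes only at $\vartheta=0$. So $\gamma(0)$ approaches the midpoint of $[\bp',\bq']$ and lies on the side away from $K$ after a reflection if needed. Therefore, for all sufficiently small $\delta$, a short piece of $\gamma$ around $0$ lies in $W$. On that piece $f\circ\gamma=A$ has a strict local maximum of value $\delta$.

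To conclude, pick $t\neq0$ small and set $\delta'=A(t)=A(-t)<\delta$, using evenness in $t$ from the lemma. Then $\gamma(\pm t)\in\cI^s_{\delta'}(K)$, but $\gamma(0)\notin\cI^s_{\delta'}(K)$. These points lie within a hemisphere and the connecting geodesic is the short arc of $\gamma$, so $\cI^s_{\delta'}(K)$ is not convex. Since $\delta'$ ranges over all sufficiently small positive values as $\delta\to0$ and $t\to0$, the theorem follows.
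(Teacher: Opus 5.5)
\textbf{The gap is the identity $f=A$, which you flag as the main technical point and which fails whenever $\partial K$ is differentiable at an endpoint of $\sigma$.} Your overall route is the paper's. You identify $f(\bz)=\vol_2^s([\bz,K]\setminus K)$ with the triangle area $A(\bz)=\vol_2^s(\conv\{\bz,\bp',\bq'\})$ near the open segment, import the strict maximum from Lemma~\ref{lem:spherical_triangle}, and finish with three points on $\gamma$.

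Suppose $\partial K$ is differentiable at $\bp'$ (or at $\bq'$). Then the tangent cone of $K$ at $\bp'$ is the whole closed half-plane bounded by the tangent line of the great circle $\ell\supset\sigma$. For every $\bz$ in the open hemisphere opposite to $K$, the geodesic from $\bz$ through $\bp'$ continues in a direction interior to that cone, so it enters $\interior K$. It is therefore not a supporting geodesic, the true contact point lies beyond $\bp'$ on the curved part of $\partial K$, and $f(\bz)>A(\bz)$ for \emph{every} such $\bz$. An example is the body which, in a gnomonic chart, is the convex hull of two congruent discs.

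Your justification confuses ``points of $K$ near $\bp'$ lie strictly inside the hemisphere'' with what is actually needed, namely a uniform angle between $\partial K$ and $\ell$ at $\bp'$. The continuation of the geodesic $\bz\bp'$ leaves $\ell$ at $\bp'$ at a fixed positive angle, while $\partial K$ may leave $\ell$ tangentially, and no compactness argument helps at $\bp'$. The identity is correct only when $\bp',\bq'$ are genuine corners (positive exterior angles) and $\bz$ is seen from each endpoint at an elevation below that exterior angle; in that case your proof is complete. The paper's proof asserts the same identity without justification, even for the endpoints of an arbitrary, not necessarily maximal, segment. So following it does not close this gap.

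\textbf{Repair by perturbation.} You can avoid $f=A$ altogether. In general $f=A+S_{\bp'}+S_{\bq'}$, where $S_{\bp'}(\bz)$ is the area of $\conv\{\bz,\bp',\bx_{\bp'}(\bz)\}\setminus K$ and $\bx_{\bp'}(\bz)$ is the actual contact point. The point $\bx_{\bp'}(\bz)$ lies in the thin wedge at $\bp'$ between $\ell$ and the continuation of the geodesic $\bz\bp'$. This gives $S_{\bp'}(\bz)\le c_1\,h\,d(\bx_{\bp'}(\bz),\bp')$, with $h=d(\bz,\ell)$ and $c_1$ uniform for feet of $\bz$ in a compact part of the open segment. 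Maximality of $\sigma$ (that is, $K\cap\ell=\sigma$) together with compactness gives $d(\bx_{\bp'}(\bz),\bp')\to0$ as $h\to0$.

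Hence $S_{\bp'}+S_{\bq'}=o(\vartheta)$ on $\gamma([-T,T])$ for a fixed $T\in(0,\varphi)$. Meanwhile the lemma's explicit formula gives $A(0)-A(\pm T)\ge c(\varphi,T)\,\vartheta$ with $c(\varphi,T)>0$. So $f(\gamma(0))>\max\{f(\gamma(T)),f(\gamma(-T))\}$ for small $\vartheta$, and any $\delta'$ strictly in between exhibits non-convexity of $\cI^s_{\delta'}(K)$. Continuity of $f$ and of $\vartheta\mapsto\gamma_\vartheta$ shows that these intervals of $\delta'$ cover all sufficiently small values. This argument needs neither a strict local maximum of $f\circ\gamma$ nor the symmetry $f(\gamma(t))=f(\gamma(-t))$.
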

\begin{proof}
Suppose that $\partial K$ contains a geodesic segment with endpoints $\bp$ and $\bq$.
The great circle through $\bp$ and $\bq$ determines a supporting hemisphere $H^+$ whose interior is disjoint from $K$.

If $\bx\in \interior H^+$ is sufficiently close to the relative interior of $[\bp,\bq]$, then
\[
\vol_2^s([\bx,K]) = \vol_2^s(K) + \vol_2^s(\conv\{\bx,\bp,\bq\}).
\]

By Lemma~\ref{lem:spherical_triangle}, for every sufficiently small $\delta>0$ there exists a geodesic $\gamma=\gamma_\delta \subset \interior H^+$ such that
\[
    A(t):=\vol_2^s(\conv\{\gamma(t),\bp,\bq\})
\]
has a strict local maximum at $t=0$ and satisfies $A(0)=\delta$ as well as
\[
    A(t) = \vol_2^s([\gamma(t),K])-\vol_2^s(K),
\]
for sufficiently small $|t|>0$.
Since $A(t)$ has a strict local maximum at $t=0$, there exist $t_-<0<t_+$ such that
\[
    A(t_\pm) < A(0).
\]

Choose $\delta' \in ( \max\{A(t_-),A(t_+)\}, A(0) )$.
Then
\[
    \vol_2^s([\gamma(t_\pm),K]) - \vol_2^s(K) = A(t_\pm) < \delta',
\]
so $\gamma(t_\pm)\in \mathcal I_{\delta'}^s(K)$.
On the other hand,
\[
    \vol_2^s([\gamma(0),K]) - \vol_2^s(K) = A(0) > \delta',
\]
so $\gamma(0)\notin \cI_{\delta'}^s(K)$.
Hence $\cI_{\delta'}^s(K)$ is not convex.
\end{proof}

\subsection{Projective Finsler geometries} \label{sec:Finsler}

\begin{proof}[Proof of Theorem~\ref{thm:main_finsler}]
    Let $K\in\cK_0(X)$ and let $\vol_n^F$ be a definition of volume on $(X,F)$ with continuous density $\varphi_F:X\to (0,+\infty)$. Since $(X,F)$ is a projective Finsler geometry, $K\subset \RR^n$ is Euclidean convex body and we have
    \begin{equation*}
        \cI^F_\delta(K) = \cI^{\varphi_F}_\delta(K).
    \end{equation*}
    Furthermore, since $\cI^{\varphi_F}_\delta(K)$ converges to $K$ as $\delta\to 0^+$ there exists $\delta_0$ such that $L=\cI^{\varphi_F}_{\delta_0}(K)$ is a compact star-body. Thus $\varphi_F$ restricted to $\interior L$ is integrable and we conclude by Theorem~\ref{thm:main_weighted}
    \begin{equation*}
        \lim_{\delta\to 0^+} \frac{\vol_n^F(\cI^F_\delta(K))-\vol_n^F(K)}{\delta^{\frac{2}{n+1}}}
        = c_n \int_{\partial K} H_{n-1}^e(K,\bx)^{\frac{1}{n+1}} \varphi_F(\bx)^{\frac{n-1}{n+1}}\, \dd\bx.
    \end{equation*}
    This completes the proof.
\end{proof}

\subsection{Dual volumes}

\begin{proof}[Proof of Theorem~\ref{thm:main_dual}]
    For $q>0$ and a star body $L\subset \RR^n$ we find
    \begin{align*}
        \widetilde{V}_q(L)
        &= \frac{q}{n} \int_{\SS^{n-1}} \int_{0}^{\rho(L,\bu)} s^{q-1} \, \dd s \, \dd\bu
        = \frac{q}{n} \int_{L} \|\bx\|^{q-n}\, \dd\bx.
    \end{align*}
    where we used polar coordinates $\bx=s\bu$.

    Let $K\in\cK(\RR^n)$ be such that $\bo\in \interior K$. Then there exists $\rho>0$ such that $B_{\bo}(\rho)\subset \interior K$.
    Thus
    \begin{align*}
        \widetilde{V}_q(\cI_\delta(K))-\widetilde{V}_q(K)
            &= \frac{q}{n} \int_{\cI_\delta(K)\setminus K} \|\bx\|^{q-n}\,\dd\bx\\
            &= \vol_n^{\psi_q}(\cI_\delta(K)) -\vol_n^{\psi_q}(K),
    \end{align*}
    where we define the continuous function $\psi_q:\RR^n\to (0,\infty)$ for $q>0$ by
    \begin{equation*}
        \psi_q(\bx) = \begin{cases}
                            \frac{q}{n} \|\bx\|^{q-n} & \text{if $\|\bx\|\geq \rho$,}\\
                            \frac{q}{n} \rho^{q-n}  &\text{ if $\|\bx\|<\rho$}.
                       \end{cases}
    \end{equation*}

    Similarly, for $q<0$ we find
    \begin{align*}
        \widetilde{V}_q(L)
        &= \frac{-q}{n} \int_{\SS^{n-1}}\int_{\rho(L,\bu)}^{+\infty} s^{q-1}\,\dd s \, \dd\bu
        = \frac{|q|}{n} \int_{\RR^n\setminus L} \|\bx\|^{q-n}\, \dd\bx.
    \end{align*}
    Thus, in this case, we have
    \begin{align*}
        0\leq \widetilde{V}_q(K)-\widetilde{V}_q(\cI_\delta(K))
            &= \frac{|q|}{n} \int_{\cI_\delta(K)\setminus K} \|\bx\|^{q-n}\,\dd\bx\\
            &= \vol_n^{\psi_q}(\cI_\delta(K)) - \vol_n^{\psi_q}(K),
    \end{align*}
    where we now set for $q<0$
    \begin{equation*}
        \psi_q(\bx) = \begin{cases}
                            \frac{|q|}{n} \|\bx\|^{q-n} & \text{if $\|\bx\|\geq \rho$,}\\
                            \frac{|q|}{n} \rho^{q-n}  &\text{ if $\|\bx\|<\rho$}.
                       \end{cases}
    \end{equation*}
    Now the statement of the theorem follows by Theorem~\ref{thm:main_weighted} for $\psi=\psi_q$ and $\varphi\equiv 1$.
\end{proof}

\subsection{\texorpdfstring{$L_p$}{Lp}-affine surface areas}

The $L_p$-affine surface areas were  introduced by Lutwak \cite{Lutwak:1996}, see also \cite{SW:2004}. For a convex body $K$ in $\mathbb{R}^n$ and $p \in \mathbb{R}$, $p \neq -n$, they are defined as
\begin{equation*}
    \as_{p}(K) = \int_{\partial K} \left(\frac{H_{n-1}(K, \bx)} {(\bx \cdot \bn_K(\bx))^{n+1}}\right)^{\!\!\frac{p}{n+p}} (\bx \cdot \bn_K(\bx)) \, \dd\bx.
\end{equation*}
The case $p=1$ is the classical affine surface area, going back to Blaschke \cite{Blaschke:1923}.
 The $L_p$-affine surface areas are semi continuous,  linear invariant valuations \cite{Ludwig:2001, Lutwak:1996}, they satisfy affine isoperimetric inequalities \cite{Lutwak:1996,WY:2008} and  they have a Steiner formula associated with them \cite{TW:2019}.
Consequently these quantities are important in many applications, e.g.,
the approximation theory of convex bodies by polytopes
\cite{Boeroeczky:2000, Reitzner:2002, SW:2003},
affine curvature flows \cite{Andrews:1999, Ivaki:2015},
and information theory \cite{Artstein-Avidan:2012, CW:2014, CFGLSW:2014, PW:2012, Werner:2012}.
\vskip 2mm
We now briefly sketch how the $L_p$-affine surface areas can be obtained as a  corollary to Theorem~\ref {thm:main_weighted}.
\newline
Let $K$ be a convex body in $\mathbb{R}^n$ that is $C^2_+$.
We specify $\psi$ and  $\varphi$
to $\varphi_p$ and $\psi_p$ such that for $\bx \in \partial K$,
\[
\psi_p(\bx) \varphi_p(\bx)^{-\frac{2}{n+1}}
    = \frac{H_{n-1}(K, \bx)^{\frac{p}{n+p}-\frac{1}{n+1}}}{(\bx \cdot \bn_K(\bx))^{\frac{n(p-1)}{n+p}}}
\]
and extend $\varphi_p$ and $\psi_p$ continuously to $U$.
Then the  $L_p$ affine surface area is  a right-derivative at $\delta=0$:
\[
\lim_{\delta\to 0^+} \frac{\vol_n^{\psi_p}(\cI_\delta^{\varphi_p}(K))- \vol_n^{\psi_p}(K)}{\delta^\frac{2}{n+1}} = c_n\, \as_p(K).
\]
\appendix

\section{Geometric examples for illumination bodies}

In this section we write $\kappa_n = \vol_n(B_2^n)$ for the volume of the Euclidean unit ball and $\omega_n = \mathcal{H}^{n}(\SS^n)$ for the surface area of the unit ball $B_2^{n+1}$. Note that $\kappa_n = n\omega_n$ and $\omega_n=2\pi\kappa_{n-1}$.

\subsection{Weighted illumination body}

    While the classical illumination body is convex and bounded for all $\delta>0$, note that $\cI_\delta^\varphi(K)$ need not be convex or bounded as the following example shows. For example, if $\varphi$ has compact support, then $\cI_\delta^\varphi(K)$ will be unbounded once $\delta$ is large enough. We give a simple example in $\RR^2$.

    \begin{example}[weighted illumination body of a square]
        Let $U=(-2,2)^2$ and $K=[-1,1]^2$ and consider the weight $\varphi\equiv 1$ on $U$. For $\be_1=(1,0)^\top\in\SS^1$ we have that $\lim_{R\to\infty} \vol_2^{\varphi}([R\be_1,K]) = \vol_2([-1,2]\times[-1,1]) = 2$ and therefore $\rho(\cI^\varphi_\delta(K),\be_1)=+\infty$ for $\delta\geq 2$. For $\bu=(\frac{1}{\sqrt{2}},\frac{1}{\sqrt{2}})^\top\in\SS^1$ we have $\lim_{R\to\infty} \vol_2^\varphi([R\bu,K]) = 8$ and therefore $\rho(\cI^\varphi_\delta(K),\bu)<+\infty$ if $\delta< 4$. So for $\delta\in [2,4)$ the weighted illumination body $\cI_\delta^\varphi(K)$ is unbounded and not convex.
    \end{example}

\subsection{Spherical and hyperbolic illumination body}

By symmetry illumination bodies of geodesic balls in $\Sp^n(\lambda)$ are again geodesic balls.

\begin{example}[Illumination body of a geodesic ball in $\Sp^n(\lambda)$]
    Let $\lambda\in\RR$ and denote the geodesic distance between two points $\bx,\by\in\Sp^n(\lambda)$ by  $d(\bx,\by)$.
    The closed geodesic ball $B^\lambda_r(\bz):=\{\bx\in\Sp^n(\lambda) : d(\bx,\bz)\leq r\}$ of radius $r>0$ around $\bz$ is a convex body if
    \begin{enumerate}
        \item $\lambda\leq 0$, or
        \item if $\lambda >0$ and $r< \frac{\pi}{\sqrt{\lambda}}$.
    \end{enumerate}
    By symmetry around $\bz$ we find that the illumination body $\cI_\delta^\lambda(B_r(\bz))$ is also a closed geodesic ball $B_{r_\delta}(\bz)$ of radius $r_\delta$ determined by
    \begin{equation*}
        \delta = \vol_n([\exp_{\bz}(r_\delta U_{\bz}),B_r(\bz)]\setminus B_r(\bz)),
    \end{equation*}
    for some arbitrary unit direction $U_{\bz}\in T_{\bz} \Sp^n(\lambda)$ and $\lambda \leq 0$, or $\lambda >0$ and $\delta \leq \delta_r^\lambda := \frac{1}{2}\vol_n^\lambda (\Sp^n(\lambda)) - \vol_n^\lambda(B_r(\bz))$.

    For $\lambda >0$ and $\bx\in \Sp^n(\lambda)$ such that $V_{B_r^\lambda(\bz)}^\lambda(\bz)>\delta_r^\lambda$ we have that $[\bz,B_r^\lambda(\bz)]=\Sp^n(\lambda)$ and therefore
    \[
        V_{B_r^\lambda(\bz)}^\lambda(\bz) = D_r^\lambda := \vol_n^\lambda (\Sp^n(\lambda)) - \vol_n^\lambda(B_r(\bz))
    \]
    for all $\bz\in\Sp^n(\lambda)$ such that $V_{B_r^\lambda(\bz)}^\lambda(\bz)>\delta_r^\lambda$. Thus, in this case, we have $\cI_\delta^\lambda(B_r(\bz))=\cI_{\delta_r^\lambda}^\lambda(B_r(\bz))$ for all $\delta \in [\delta_r^\lambda,D_r^\lambda)$ and $\cI_\delta^\lambda(B_r(\bz)) = \Sp^n(\lambda)$ for all $\delta \geq D_r^\lambda$.
\end{example}

For $n\geq2$ and $\lambda>0$ one can show that for any polytope $P\in\mathcal{K}_0(\Sp^n(\lambda))$ the illumination body $I_\delta^\lambda(P)$ is not geodesically convex for all $\delta>0$. The ideas are similar to the following example.

\begin{example}[Illumination body of a spherical square]
    We consider $\RR^2$ as a geodesic model of an open half-space of $\Sp^2(1)$. Then any convex body $\overline{K}\in \mathcal{K}_0(\RR^2)$ determines a unique spherical convex body $K$ in the open half-space of $\Sp^2(1)\cong\SS^2$.
    Furthermore, the Riemannian volume measure on $\Sp^2(1)$ is induced by the weighted volume measure $\vol_2^{\varphi_s}$ with weight function $\varphi_s(x,y)= (1+x^2+y^2)^{-3/2}$, i.e., $\vol_2(K)=\vol_2^{\varphi_s}(\overline{K})$

    Consider a (spherical) square $S$ that is determined by $\overline{S}=[-1,1]^2$. Then $\cI_\delta(S) \cong \cI_\delta^{\varphi_s}(\overline{S})$ is not convex for all $\delta >0$. To see this consider a point $\bz = (r,0)^\top \in \RR^2\setminus \overline{S}$ for $r> 1$.
    Then $[\bz,\overline{S}]\setminus \overline{S}$ is the triangle $T(\bz) = [r\bu, (1,1)^\top, (1,-1)^\top]$ and we calculate
    \begin{equation*}
        V_{\overline{S}}^{\varphi_s}(\bz) = \vol_2^s(T(\bz)) = \int_{1}^r \int_{-\frac{r-s}{r-1}}^{\frac{r-s}{r-1}} (1+s^2+t^2)^{-\frac{3}{2}} \, \dd t\, \dd s.
    \end{equation*}
    For $\bz' = (r,1)^\top\in \RR^2\setminus \overline{S}$ for  $r> 1$ we have that $[\bz,\overline{S}]\setminus \overline{S}$ is the triangle $T(\bz')= [\bz',(1,1)^\top,(1,-1)^\top]$. Thus
    \begin{align*}
        V_{\overline{S}}^s(\bz') = \vol_2^s(T(\bz'))
        &= \int_{1}^{r} \int_{\frac{2s-(r+1)}{r-1}}^1 (1+s^2+t^2)^{-3/2}\,\dd t \, \dd s\\
        &= \int_{1}^{r} \int_{-\frac{r-s}{r-1}}^{\frac{r-s}{r-1}}
            \left(1+s^2 + \left(a+\frac{s-1}{r-1}\right)^2\right)^{-3/2} \, \dd a\,\dd s\\
        &< \int_{1}^{r} \int_{-\frac{r-s}{r-1}}^{\frac{r-s}{r-1}}
            \left(1+s^2 + a^2\right)^{-3/2} \, \dd a\,\dd s = V_{\overline{S}}^s(\bz).
    \end{align*}

    Thus for $\delta=V_{\overline{S}}^s(\bz')$ the points $\bz'=(r,1)^\top$ and $\bz''=(r,-1)^\top$ belong to $\cI_{\delta}^s(\overline{S}) =\{\bx : V_{\overline{S}}^s(\bx) \leq \delta\}$, but the midpoint $\bz=(r,0)^\top = \frac{\bz'+\bz''}{2}$ does not belong to $\cI_\delta^s(\overline{S})$. Hence $\cI_\delta^s(\overline{S})$ is not convex.
    Also see Figure~\ref{fig:spherical_square}
\end{example}

\begin{figure}
    \centering
    \begin{tikzpicture}
        \node at (0,0) {\includegraphics[width=6cm]{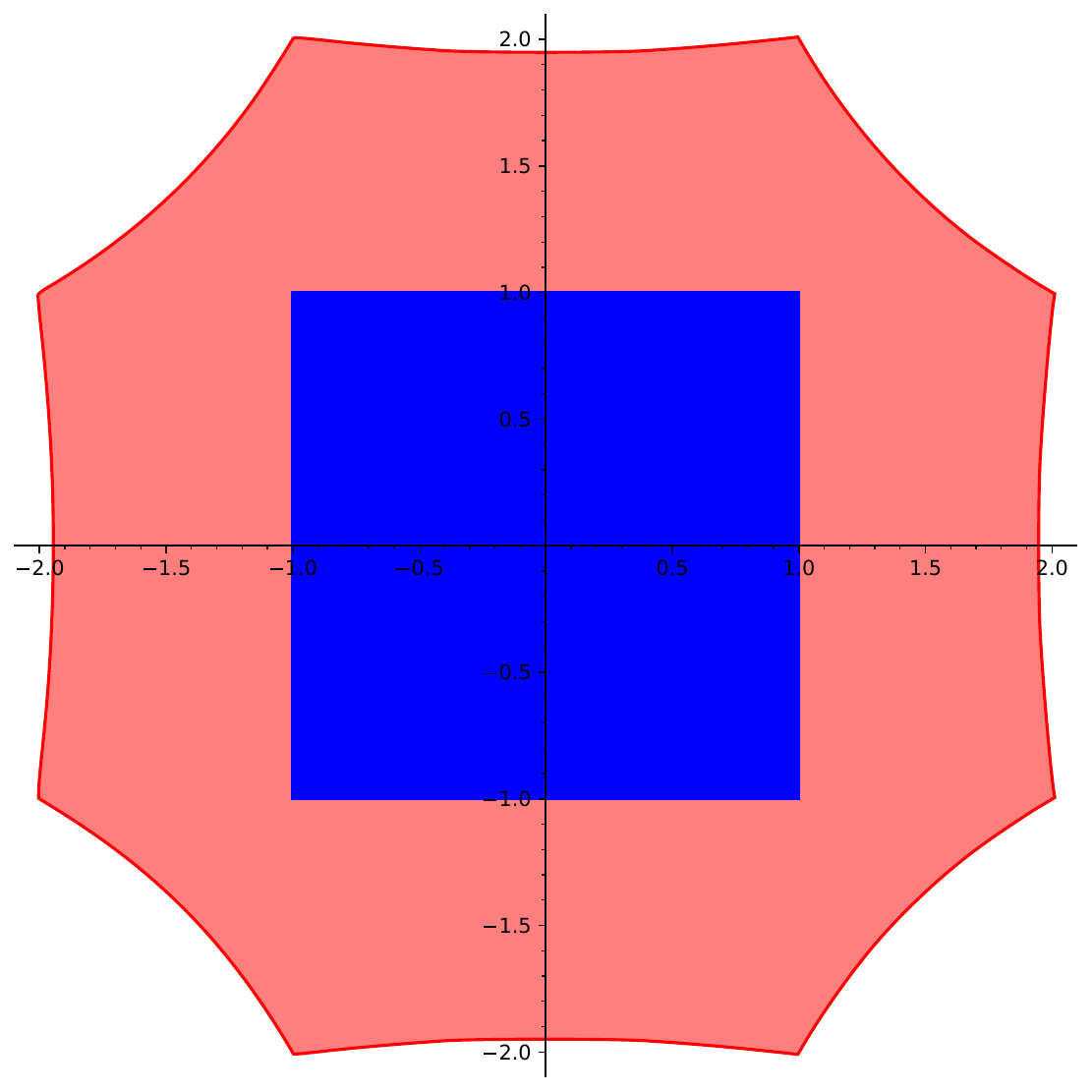}};
        \node at (6,0) {\includegraphics[width=6cm]{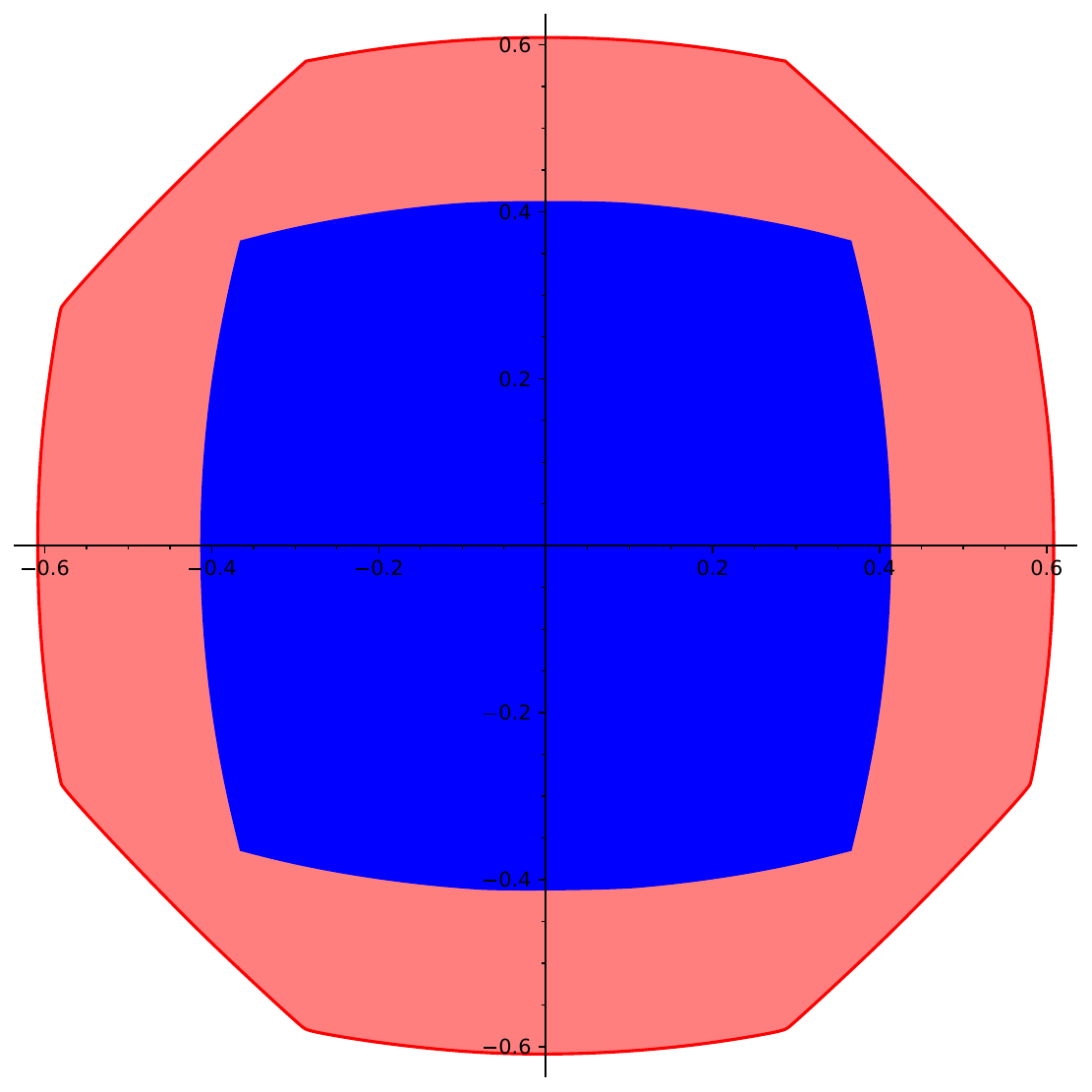}};
    \end{tikzpicture}
\caption{
The spherical illumination body (red and blue part) of a spherical square (blue part) in the gnomonic projection (left) and the stereographic projection (right). The illumination body of the square is not geodesically convex.}
\label{fig:spherical_square}
\end{figure}

We consider only compact geodesically convex subsets, but illumination bodies may also be defined for any spherical domain that is the intersection of closed half-spheres (so even if it may still contains antipodal points in the boundary).

\begin{example}[Illumination body of a spherical wedge]
The intersection of two closed half-spheres $H^+(\bp)$ and $H^+(\bq)$ with centers $\bp$ and $\bq$ in $\SS^n$ is either a $(n-1)$-dimensional great subsphere if $\bp=-\bq$, or a spherical wedge $W(\alpha)$ of width $2\alpha = \pi-d(\bp,\bq)$ for $\alpha\in(0,\frac{\pi}{2}]$ that is symmetric about $\bz = \frac{\bp+\bq}{\|\bp+\bq\|}$. The wedge $W(\alpha)$ is contained in the closed half-sphere about its center $\bz$ and converges to the closed half-sphere $H^+=W(\pi/2)$ for $\alpha\to \pi/2$.
For the spherical volume we find
\begin{equation*}
    \vol_n^s(W(\alpha)) = 2\alpha \kappa_{n-1}.
\end{equation*}

Denote by $\ell$ the $1$-dimensonal great circle spanned by $\bp$ and $\bq$ and denote by $\mathrm{proj}_{\ell}$ the spherical projection to $\ell$, which is uniquely determined for all points $\bz \in \SS^n \setminus (\partial H^+(\bp)\cap\partial H^+(\bq))$.

For points $\bz\in\SS^n$ we distinguish between the following cases:
\begin{enumerate}
 \item[i)] If $\bz\in\SS^n\setminus (H^+(\bp)\cup H^-(\bq))$, then $[\bz, W(\alpha)] = \SS^n$ and therefore $V_{W(\alpha)}^s(\bz) = \omega_n - \vol_n^s(W(\alpha))$.

 \item[ii)] If $\bz\in H^+(\bp)\setminus H^+(\bq)$, then $[\bz, W(\alpha)]$ is again a spherical wedge contained in $H^+(\bp)$ of width $\gamma(\bz)=2\alpha+d(\mathrm{proj}_\ell(\bz),\bq)-\pi/2$.
    Thus
    \[
        V_{W(\alpha)}^s(\bz) = \vol_n^s(W(\gamma(\bz))) - \vol_n(W(\alpha)) = \kappa_{n-1} (d(\mathrm{proj}_\ell(\bz),\bq)-\pi/2)
    \]
 \item[iii)] Analogously, if $\bz\in H^+(\bq)\setminus H^+(\bp)$, then $[\bz,W(\alpha)]$ is a spherical wedge contained in $H^+(\bq)$ of width $2\alpha + d(\mathrm{proj}_\ell(\bz),\bp)$ and $V_{W(\alpha)}^{s}(\bz) = \kappa_{n-1} (d(\mathrm{proj}_\ell(\bz),\bp)$.

 \item[iv)] Finally, for $\bz\in H^+(\bp)\cap H^+(\bq)=W(\alpha)$ we have $[\bz,W(\alpha)]=W(\alpha)$ and therefore $V_{W(\alpha)}^s(\bz)=0$.
\end{enumerate}

Thus, for $\delta\in (0,\delta_\alpha)$ the sublevel sets of $V_{W(\alpha)}^s$ are (possibly degenerated) spherical wedges $W(R(\delta,\alpha))$ with center $\bz$, where
\begin{equation*}
    \delta_\alpha: = \frac{\omega_n}{2} - \vol_n^s(W(\alpha)) = (\pi-2\alpha)\kappa_{n-1},
\end{equation*}
and
\begin{equation*}
    R(\delta,\alpha) = \alpha+\frac{\delta}{\kappa_{n-1}} \in (\alpha,\pi-\alpha).
\end{equation*}
Note that $\pi-\alpha >\pi/2$ and therefore if $R(\delta,\alpha) > \pi/2$, or equivalently if $\delta > \delta_\alpha/2$, then the illumination body $\cI_\delta^s(W(\alpha))$ is not contained in the closed half-sphere $H^+(\bz)$ and therefore not spherical convex, but still star-shaped with respect to $\bz$. For $\delta\to \delta_\alpha^-$ the illumination body $\cI_\delta^s(W(\alpha))$ converges to $H^+(\bp)\cup H^+(\bq)$.
\end{example}

\begin{figure}[t]
\centering
 \begin{tikzpicture}
        \node at (0,0) {\includegraphics[width=6cm]{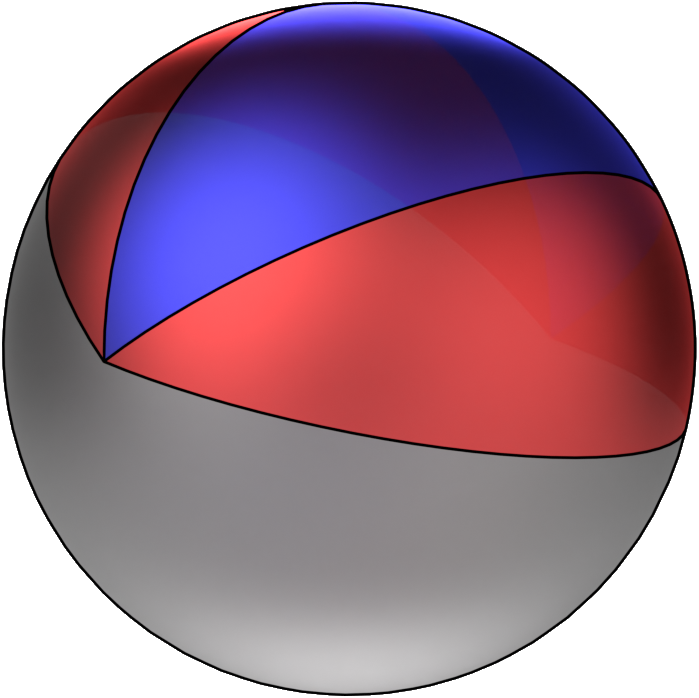}};

    \end{tikzpicture}
\caption{The illumination body (red and blue parts) of a spherical wedge (blue part) on $\SS^2$.}

\label{fig:spherical_wedge}
\end{figure}

\begin{example}[Illumination body of a horoball]
The illumination body of a horoball $B^\infty\subset \HH^n$ is again a horoball (meeting the same ideal point as $B^\infty$) for $\delta \in (0,\delta_1^\infty)$, where
\begin{align*}
    \delta_1^\infty := \vol_n^h([\bu,B^\infty])
    &= \kappa_{n-1} \int_{0}^1 h^{-n} (1-\sqrt{1-h^2})^{n-1} \dd h\\
    &< \kappa_{n-1} \int_{0}^1 h^{n-2}\dd h < +\infty.
\end{align*}
For the hyperbolic plane, i.e.\ $n=2$, we find
\begin{equation*}
    \delta_1^\infty = 2-\frac{\pi}{2} = 0.429203...
\end{equation*}

For the calculation of $\delta_1^\infty$ we may consider the half-space model of $\HH^n$ and consider the horoball that is given by the half-space $B^\infty = \{\bx \in \RR^n : x_n \geq 1\}$ and let $\bu=\bo$ be the origin of $\RR^n$. Then
\begin{align*}
    \vol_n^h([\bo,B^\infty]\setminus B^\infty) &= \int_{[\bo,B^\infty]\setminus B^\infty} \frac{1}{x_n^n} \, \dd \bx \\
    & = \int_0^1 \frac{1}{h^n} \vol_n^e([\bo,B^\infty]\cap \{x_n=h\}) \, \dd h.
\end{align*}
Now the section at height $x_n=h$ of $[\bo,B^\infty]$ is a $(n-1)$-dimensional Euclidean ball of radius $r(h)=1-\sqrt{1-h^2}$ and the expression for $\delta_1^\infty$ follows, see Figure~\ref{fig:1}.
\end{example}

\begin{example}[Illumination body of an ideal triangle]
    An ideal point of the hyperbolic plane $\HH^2$ is a point on the boundary at infinity, that is, a point on the circle $\SS^1\cong\partial \HH^2$. Any three distinct ideal points determine an ideal hyperbolic triangle $T_\infty\subset \HH^2$, which is unbounded and geodesically convex. The interior angles at the ideal vertices of $T_\infty$ are zero and this shows that the hyperbolic area is $\vol_2^h(T_\infty)=\pi$. Note also that all ideal triangles are congruent in $\HH^2$, i.e., up to isometry the ideal triangle is unique. In fact, among all hyperbolic triangles $T$, the ideal triangle $T_\infty$ maximize the hyperbolic area, i.e., $\vol_2^h(T)\leq \vol_2^h(T_\infty)=\pi$.

    For a point $\bz \in \HH^2\setminus T_\infty$ there are two ideal vertices of $T_\infty$ that can be seen from $\bz$, say $\ba,\bb\in\partial \HH^2$. The hyperbolic triangle $T(\bz):=\conv\{\bz,\ba,\bb\}$ has area $\vol_2^h(T(\bz)) = \pi - \vartheta$, where $\vartheta$ is the interior angle at $\bz$ of $T(\bz)$. For a fixed angle $\vartheta$, these points are at a fixed hyperbolic distance $h$ from the side $[\ba,\bb]$ of $T_\infty$, where
    \begin{equation*}
        (\cosh h)(\sin\vartheta/2) = 1.
    \end{equation*}
    Hence these points are on a $\lambda$-geodesic curve with ideal endpoints $\ba,\bb$ where
    \begin{equation*}
        \lambda = \tanh d = \cos \frac{\vartheta}{2} = \sin \frac{\delta}{2},
    \end{equation*}
    for $\delta = V_{T_\infty}^h(\bz) = \vol_2^h(T(\bz))\in (0,\pi)$.

    This yields for $\delta\in (0,\pi)$, that the hyperbolic illumination body $\cI_\delta^h(T_\infty)$ is a $\lambda$-geodesic ideal triangle with the same ideal vertices as $T_\infty$ and where $\lambda=\sin\frac{\delta}{2}\in (0,1)$; also see Figure~\ref{fig:hyperbolic_triangle}.
    For $\delta \to \pi^-$ the area of $T(\bz)$ goes to $\pi$ which forces $\bz$ towards infinity. Therefore $\lim_{\delta\to \pi^-} \cI_\delta^h(T_\infty) = \HH^2$.
\end{example}

\begin{figure}
    \centering
    \begin{tikzpicture}[scale=4]
        \fill[opacity=.2] (0,0) arc (180:90:1) -- (-1,1)  arc(90:0:1) -- cycle;

        \draw[->] (-1.2,0) -- (1.2,0) node[below] {$\be_n^\bot\cong \RR^{n-1}$};
        \draw[->] (0,-0.2) -- (0,1.2) node[left] {$\be_n$};
        \draw[thick] (-1.2,1) -- (1.2,1);
        \draw[thick] (0,0) arc (180:90:1);
        \draw[thick] (0,0) arc (0:90:1);
        \draw[dotted] (1,0) -- (1,1);
        \draw (1,0) -- ({1-sqrt(1-0.7^2)},0.7) node[midway,above] {$1$} -- ({1-sqrt(1-0.7^2)},0) node[midway,right] {$h$};

        \draw (0,0.7) -- ({1-sqrt(1-0.7^2)},0.7) node[midway, above] {$r(h)$};

        \fill (1,1) circle (0.02);
        \fill (-1,1) circle (0.02);
        \fill (0,0) circle (0.02) node[below right] {$\bo$};
        \node[right] at (1.2,1) {$\partial B^\infty$};
        \node at (-0.2,0.9) {$[\bo,B^\infty]\setminus B^\infty$};
    \end{tikzpicture}
    \caption{Sketch for the maximal horoball cap.}
    \label{fig:1}
\end{figure}

\begin{figure}
    \centering
    \begin{tikzpicture}
        \node at (0,0) {\includegraphics[width=6cm]{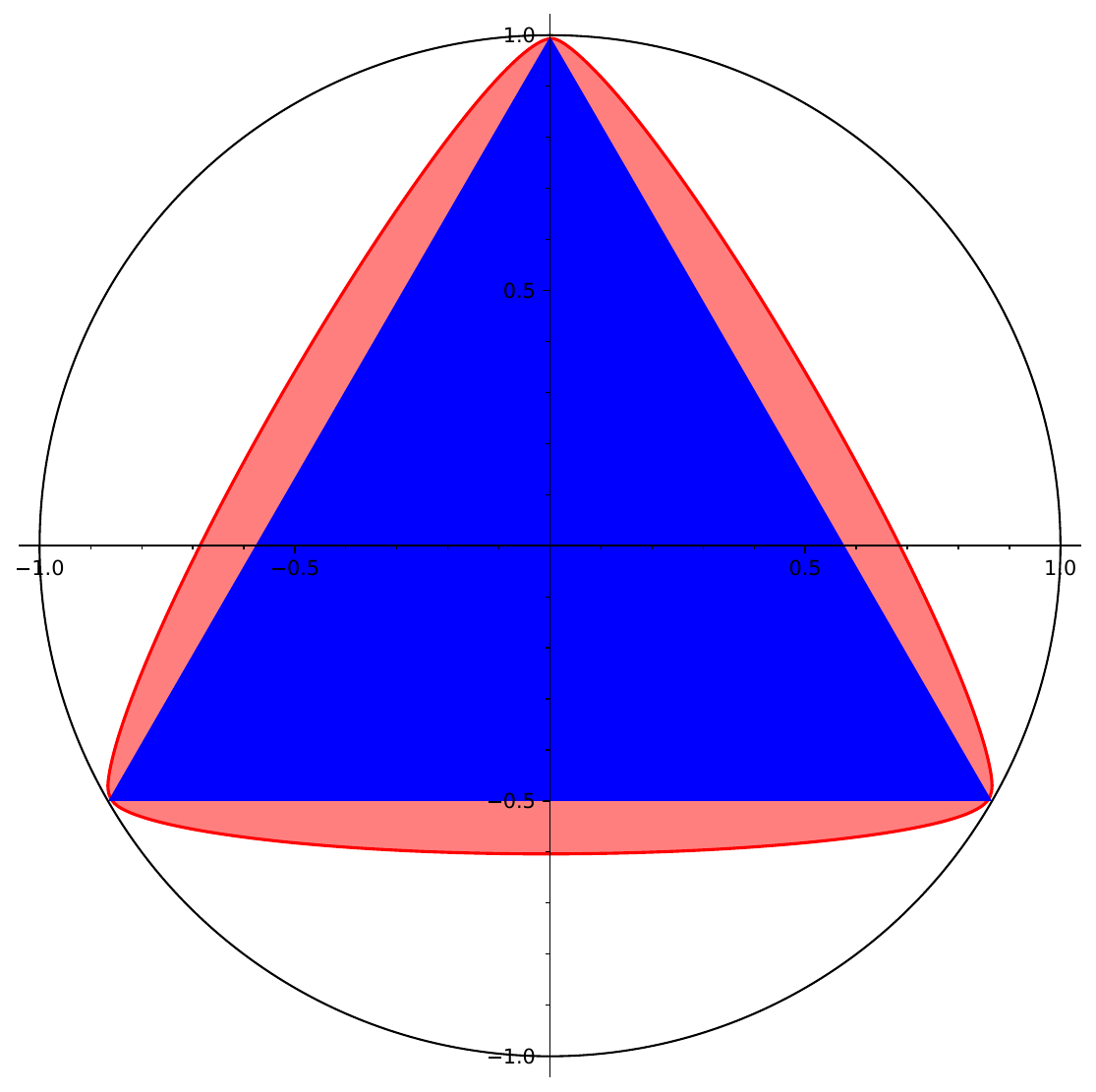}};
        \node at (6,0) {\includegraphics[width=6cm]{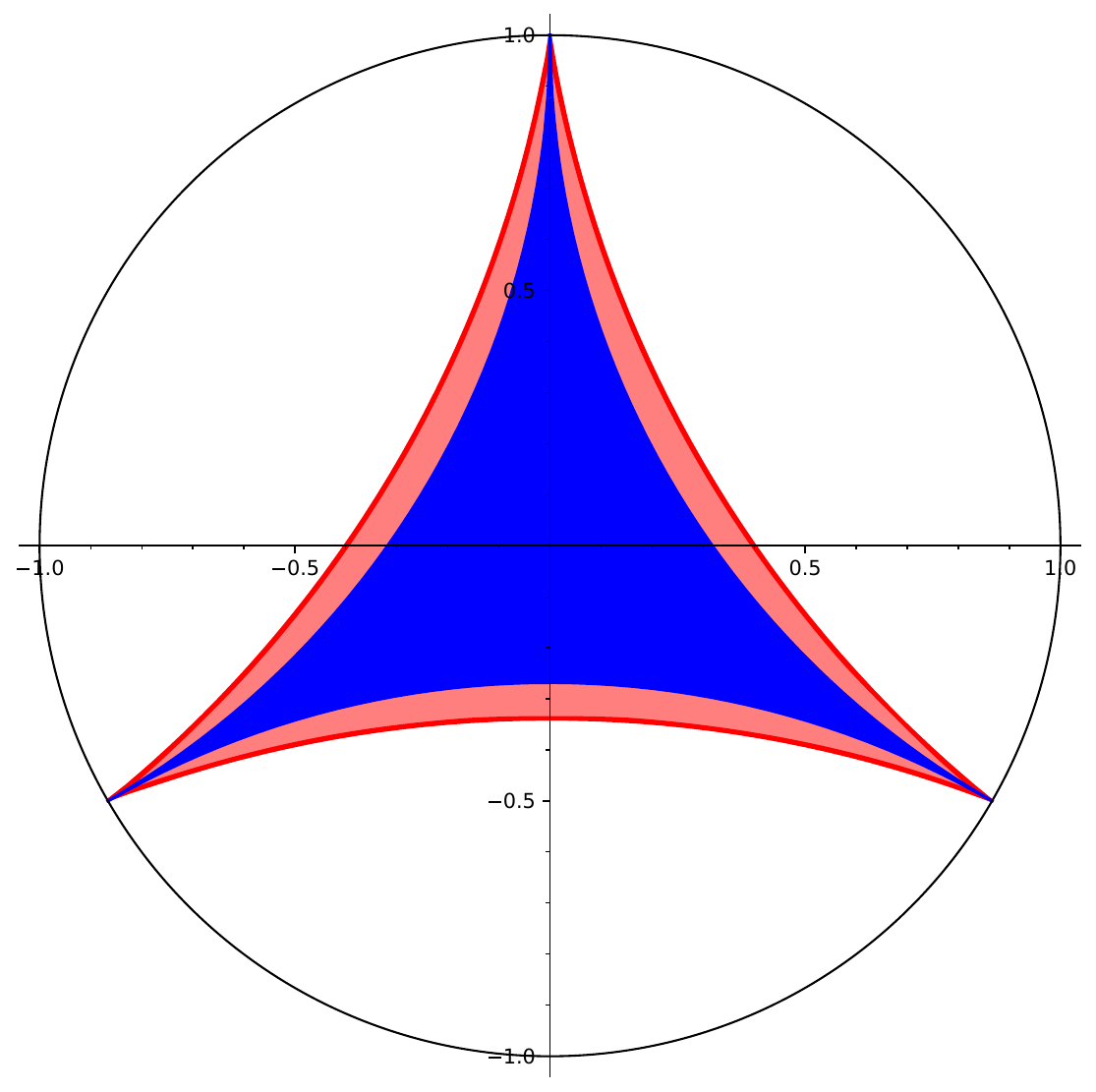}};
    \end{tikzpicture}
    \caption{Hyperbolic illumination body of an ideal triangle in the projective model (left) and the Poincar\'e model (right).
    The hyperbolic illumination body is a $\lambda$-geodesic ideal triangle where $\lambda=\sin \frac{\delta}{2}$ for $\delta\in (0,\pi)$.}
    \label{fig:hyperbolic_triangle}
\end{figure}

\end{document}